\documentclass[10pt]{article}
\bibliographystyle{siam}

\usepackage{amstext, amsmath, amsthm, amssymb}
\usepackage{amsmath}
\usepackage{geometry} 
\geometry{a4paper} 

\usepackage{color}

\numberwithin{equation}{section}
\usepackage[nottoc]{tocbibind}
\usepackage{hyperref}
\usepackage{makeidx}
\makeindex

\newtheorem{thm}{Theorem}[section]
\newtheorem{cor}[thm]{Corollary}
\newtheorem{lem}[thm]{Lemma}
\newtheorem{prop}[thm]{Proposition}
\newtheorem{defin}[thm]{Definition}
\newtheorem{rem}[thm]{Remark}
\newtheorem{example}[thm]{Example}

\newtheorem{hypoth}[thm]{Hypothesis}

\usepackage{colortbl}

\newcommand{\Z}{\mathbb{Z}}

\def\prodl{\prod\limits}

\def\suml{\sum\limits}

\def\det{\operatorname{det}}

\title{Stretching maps for tensors}

\author{Vyacheslav Futorny, Mikhail Neklyudov, Kaiming Zhao}

\AtEndDocument{\bigskip{\footnotesize%
 (V.\,Futorny) \textsc{SUSTech, Shenzhen, China} and \textsc{University of S\~ao Paulo, Brasil} \par
  \textit{E-mail address}: \texttt{vfutorny@gmail.com}  \par
   \addvspace{\medskipamount}
  (M.\,Neklyudov) \textsc{Instituto de Ci\^{e}ncias Exatas, Departamento de Matematica, UFAM, Manaus, CEP 69077-000, Brasil} \par
  \textit{E-mail address}: \texttt{misha.neklyudov@gmail.com}\par
   \addvspace{\medskipamount}
   (K.\,Zhao) \textsc{Wilfrid Laurier University, Waterloo, N2L3C5 Canada}
\par 
\textit{E-mail address}: \texttt{kzhao@wlu.ca}
}}


\bigskip

\date{}


\begin{document}
\maketitle

\begin{abstract}
We consider an algebra of even-order square tensors and introduce a stretching map (Definition \ref{def:StretchingMap}) which allows us to represent tensors as matrices. The stretching map could be understood as a generalized matricization. It conserves algebraic properties of the tensors (Theorem \ref{thm:ProductProp}). In the same time, we don't necessarily assume injectivity of the stretching map. Dropping the injectivity condition allows us to construct examples (section \ref{sec:StMapsRectSets}) of stretching maps with additional symmetry properties.
Furthermore, the noninjectivity leads to the averaging of the tensor and possibly could be used to compress the data.

\end{abstract}


\section{Introduction}

The problems related to the classification, decomposition and realization of tensors appear in a multitude of fields of pure and applied science such as algebra and linear algebra (\cite{FGS2019}, \cite{BMW2017}, \cite{LMV2000},\cite{LMV2004},\cite{Kolda2001}, \cite{ZG2001}), quantum physics (\cite{WGE2016}), numerical analysis (\cite{BM2005},\cite{HK2007},\cite{HKT2005}), signal processing (\cite{Comon2001},\cite{LV2004},\cite{MB2005}), chemometrics (\cite{SBG2004}), data mining (\cite{LZYC2005}, \cite{SE2007}, \cite{STF2006}) etc. 


A primary step  of the analysis of tensors in many algorithms is their matricization (also called reshaping or unfolding of tensors \cite{KoldaBader2009}) which transforms a given tensor into the matrix. Afterwards, it is possible to use the whole machinery of linear algebra. The aim of this paper is a better understanding of the procedure and its generalization. We consider a matrix tensor algebra and introduce a stretching map (Definition \ref{def:StretchingMap}) which allows us to represent tensors as matrices. The stretching map could be understood as a generalized matricization. It conserves algebraic properties of the tensors (Theorem \ref{thm:ProductProp}). In the same time, we don't necessarily assume injectivity of the stretching map. Dropping the injectivity condition allows us to construct examples (section \ref{sec:StMapsRectSets}) of stretching maps with additional symmetry properties.
Furthermore, the noninjectivity leads to the averaging of the tensor and possibly could be used to compress the data.

In this paper we consider the associative algebra of even-order square tensors, that is tensors of type $(l,l), l\in\mathbb{N}$. The algebra is endowed with a product operation (see definition \ref{def:product}) which generalizes the Einstein product (\cite{LZZ2019}). This generalization allows us to investigate certain class of representations which could be non-faithful. We show that these representations can be presented as compositions of averaging operators with the map similar  to the tensor product map (Theorem \ref{thm:UnitaryEquivalence}). The averaging operator (see definition \ref{def:AveragingOperator}) destroys some  algebraic properties of the representation, for instance, its invariance with respect to the unitary transformations. In the same time, examples \ref{ex:ex_a}, \ref{ex:ex_b}, \ref{ex:ex_d} considered in section \ref{sec:StMapsRectSets}, show that a nontrivial averaging operator leads to some additional symmetries of the representation.

We first introduce some notations. Denote by ${\rm Mat}(n)$ the space of all $n\times n$ ($n\in\mathbb{N}\cup\{\infty\}$) matrices. Let $l\in\mathbb{N}$ and $\mathbb{A}\subseteq \mathbb{Z}^l$. Denote by ${\rm Mat}(\mathbb{A})$ the space of  matrices (possibly infinite) whose rows and columns are indexed by  $\mathbb{A}$.

The present paper is organized as follows. In Section 2, for any function $F:\mathbb{A}\to \mathbb{Z}$ we define the stretching map $\rho_F:{\rm Mat}(\mathbb{A})\to {\rm Mat}(\infty)$, and  the permutation operator $R_\sigma:{\rm Mat}(\mathbb{\infty})\to {\rm Mat}(\infty)$. In Section 3,  for any function $F:\mathbb{A}\to \mathbb{Z}$  we define the convolution operation on ${\rm Mat}(\mathbb{A})$ to make it into an associative algebra, and obtain some properties for the convolution product. In Section 4,   using stretching maps we establish an algorithm to compute the canonical form of a tensor matrix  $ C_1\otimes C_2 \otimes \ldots \otimes C_n$ under similarity.  In Section 5, we prove that, when $\mathbb{A}$ is finite and rectangular,   then for any   map $F:\mathbb{A}\to \mathbb{Z}$,
 the representation $\rho_F^{\mathbb{A}}$ for Mat$(\mathbb{A})$ is a composition of the averaging map $\Psi$ and the map unitary equivalent to the  tensor product map $\rho_{F}$. If $\mathbb{A}=\mathbb{Z}^l$ then the representation $\rho_F$ is a composition of the averaging map $\Psi$ and the map unitary equivalent to the universal  tensor product map $\rho_{F_{en}}$ \eqref{def:UTP}. Also we 
provide several examples  of stretching map with rectangular set $\mathbb{A}$ and non-injective $F$.

Authors hope that the class of representations  defined in this paper will have applications in coding theory and image processing.  In particular, the averaging operator could be of use to compress the image according to some specifications.

Throughout this paper we denote by $\mathbb{Z}$,   $\mathbb{N}$ and
$\mathbb{C}$ the set of  all integers,  
positive integers and complex numbers, respectively.  All matrices and vector spaces are assumed to be over $\mathbb{C}$.

\section{Stretching maps}
Denote by ${\rm Mat}(n)$ the associative algebra of $n\times n$ ($n\in\mathbb{N}\cup\{\infty\}$) matrices   with the standard basis of matrix units $E_{ij}^{n}$, $0\leq i,j\leq n-1$. We will omit $n$ and simply write $E_{ij}$  if $n=\infty$. Denote by  $e_i^n,0\leq i\leq n-1$ the elements of the standard basis of $\mathbb{C}^n$.

Let $l\in\mathbb{N}$ and $\mathbb{A}\subset \mathbb{Z}^l$. Denote by ${\rm Mat}(\mathbb{A})$ the space of  matrices (possibly infinite) whose rows and columns are indexed by the set $\mathbb{A}$, such matrices we identify with the operators on $\mathbb C^{|\mathbb{A}|}$.

Let $T\in {\rm Mat}(\mathbb{A})$. For each   $\overline{i}, \overline{j}\in \mathbb{A}$ denote 
 by $T_{\overline{i},\overline{j}}=T_{i_1,j_1,\ldots,i_l,j_l}$ the  entry of $T$ in the $\overline{i}$th row and $\overline{j}$th column. 

Fix a function $F:\mathbb{A}\to \mathbb{Z}$.
 We say that $\overline{i}\in \mathbb{A}$ is equivalent to $\overline{j}\in \mathbb{A}$ and denote $\overline{i}\backsim_F \overline{j}$ (for short, $\overline{i}\backsim \overline{j}$) if
\[
F(\overline{i})=F(\overline{j}).
\]

In particular, let  
$\overline{i}=(i_1,\ldots, i_l)$, $\overline{j}=(j_1,\ldots, j_l)$ and 
 $\overline{k}=(k_1,\ldots, k_l)$. 
 Denote by $\overline{i}\cdot \overline{j}:=\sum\limits_{s=1}^l i_s j_s$ the usual dot product in $\mathbb{C}^l$ and by $F_{\overline{k}}$ the function 
 $$F_{\overline{k}}(\overline{i})=\overline{k}\cdot \overline{i}.$$
Then $\overline{i}\backsim_{F_k} \overline{j}$ if $\overline{i} - \overline{j}$ is orthogonal to $\overline{k}$.

Denote by $\mathbb{C}^{\mathbb{A}}$ the space of complex tuples indexed by $\mathbb{A}$ with standard basis $\{e_{\overline{i}}: \overline{i}\in \mathbb{A}\}$.

\begin{defin}\label{def:StretchingMap} For any $T\in {\rm Mat}(\mathbb{A})$, we  define the matrix $\rho_F(T)\in  {\rm Mat}(\infty)$ as follows
\[
\rho_F(T):=\suml_{\overline{i},\overline{j}\in \mathbb{A}}T_{i_1,j_1,\ldots,i_l,j_l} E_{F(\overline{i}),F(\overline{j})}.
\]
We call $\rho_F$ as 
the {\bf stretching map}  associated to the function $F$.
Similarly, for any $x\in \mathbb{C}^{\mathbb{A}}$ we define the vector $\rho_F^0(x)\in \mathbb{C}^{\infty}$
as follows:
\[
\rho_F^0(x)=\sum\limits_{\overline{i}\in \mathbb{A}}x_{i_1,i_2,\ldots,i_l}e_{F(\overline{i})}.
\]
 We call $\rho_F^0$ as the {\bf vector  stretching map}.
Furthermore, we define the action $*$ of  ${\rm Mat}(\mathbb{A})$ on  $\mathbb{C}^{\mathbb{A}}$ as follows: for $T\in {\rm Mat}(\mathbb{A})$, $x\in \mathbb{C}^{\mathbb{A}}$ and for every $\overline{i}\in \mathbb{A}$ we have
\[
(T\ast x)_{\overline{i}}:=\sum\limits_{\overline{j}, \overline{l}: \overline{j}\backsim\overline{l}} T_{\overline{i},\overline{j}}x_{\overline{l}},
\]
\end{defin}
We may need some restrains on $F$ or on $T$ for $\rho_F(T)$ and $\rho_F^0(x)$ to be well-defined. In this paper when $\rho_F(T)$ and $\rho_F^0(x)$ appear we mean they are well-defined.
\begin{rem}
If $\mathbb{A}$ is a finite set and function $F$ is injective then stretching map $\rho_F$ is a generalization of notion of matricization (also called reshaping, unfolding or flattening, pp. 459-460 \cite{KoldaBader2009}) of even order square tensor. Note that we allow arbitrary permutation of elements of tensor (by the choice of $F$) comparing to the  definition of matricization in \cite{Kolda2006}, p. 10.
\end{rem}
Let $S_l$ denote the symmetric group. Denote for $\sigma\in S_l$ and $\overline{i}=(i_1,\ldots,i_l)\in\mathbb{Z}^l$, $\sigma(\overline{i}):=(i_{\sigma(1)},\ldots,i_{\sigma(l)})$.
\begin{defin} 
For each $\sigma\in S_l$ such that $\sigma(\mathbb{A})\subset \mathbb{A}$ define the permutation operator $R_{\sigma}: \rho_F({\rm Mat}(\mathbb{A}))\to \rho_F({\rm Mat}(\mathbb{A}))$ as follows
\[
R_{\sigma}(\rho_F(T)):=\rho_{F\circ\sigma}(T).
\]

In particular case when $\sigma=r$ is the reversal permutation, i.e. $r(\{1,2,\ldots,l\}):=\{l,\ldots,2, 1\}$, we will call $R_r$ the reversal operator. 
\end{defin}

One can easily verify that the map $R_{\sigma}$ above is well-defined, i.e., it does not depend on different choices of $T$. If $\rho_F$ is onto, then $R_\sigma$ gives a vector space isomorphism on ${\rm Math}(\infty).$

\begin{prop}
Assume that $\sigma_1,\sigma_2\in S_l$ and  $\sigma_p(\mathbb{A})\subset \mathbb{A}$, $p=1,2$. Then
\[
R_{\sigma_1}R_{\sigma_2}=R_{\sigma_2\sigma_1}, R_{\sigma_1}^{-1}=R_{\sigma_1^{-1}}.
\]
\end{prop}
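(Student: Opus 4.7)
The plan is to unwind both identities directly from the defining formula $R_{\sigma}(\rho_F(T)):=\rho_{F\circ\sigma}(T)$ and reduce everything to the associativity of function composition, noting that the remark preceding the statement guarantees that $R_{\sigma}$ does not depend on the chosen preimage $T$.

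Before chaining the maps, I would first check that the indexing hypothesis is preserved under the operations appearing in the conclusion. For composition,
\[
(\sigma_2\sigma_1)(\mathbb{A})=\sigma_2(\sigma_1(\mathbb{A}))\subset\sigma_2(\mathbb{A})\subset\mathbb{A},
\]
so $R_{\sigma_2\sigma_1}$ is well-defined. For the inverse, I would use that $S_l$ is finite: if $\sigma_1$ has order $n$, then iterating the inclusion $\sigma_1(\mathbb{A})\subset\mathbb{A}$ yields $\sigma_1^{k}(\mathbb{A})\subset\mathbb{A}$ for all $k\ge 1$; taking $k=n-1$ gives $\sigma_1^{-1}(\mathbb{A})\subset\mathbb{A}$, so $R_{\sigma_1^{-1}}$ is also well-defined.

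For the main identity, pick any $T\in{\rm Mat}(\mathbb{A})$ and compute, using only the definition and the associativity of $\circ$,
\[
R_{\sigma_1}\bigl(R_{\sigma_2}(\rho_F(T))\bigr)
=R_{\sigma_1}\bigl(\rho_{F\circ\sigma_2}(T)\bigr)
=\rho_{(F\circ\sigma_2)\circ\sigma_1}(T)
=\rho_{F\circ(\sigma_2\sigma_1)}(T)
=R_{\sigma_2\sigma_1}(\rho_F(T)).
\]
Since every element in the common domain has the form $\rho_F(T)$, this gives $R_{\sigma_1}R_{\sigma_2}=R_{\sigma_2\sigma_1}$. For the inverse claim, substitute $\sigma_2=\sigma_1^{-1}$ into the composition formula to get $R_{\sigma_1}R_{\sigma_1^{-1}}=R_{\sigma_1^{-1}\sigma_1}=R_{e}$, and observe $R_{e}(\rho_F(T))=\rho_{F\circ e}(T)=\rho_F(T)$, so $R_{e}=\mathrm{id}$. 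Swapping the two factors gives $R_{\sigma_1^{-1}}R_{\sigma_1}=\mathrm{id}$ by the same computation, hence $R_{\sigma_1}^{-1}=R_{\sigma_1^{-1}}$.

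The proof is essentially a bookkeeping exercise; there is no genuine obstacle. The one thing worth guarding against is a domain/codomain mismatch — in general $R_{\sigma}$ is only asserted to be defined on $\rho_F({\rm Mat}(\mathbb{A}))$ — but the preceding remark isolates well-definedness, and the two inclusions verified above ensure that $R_{\sigma_2\sigma_1}$ and $R_{\sigma_1^{-1}}$ appearing in the conclusion are themselves legitimate operators of the same kind as $R_{\sigma_1}$ and $R_{\sigma_2}$.
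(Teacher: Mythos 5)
Your overall strategy (reduce everything to a composition law, and check the domain conditions separately) is sensible, and your observation that $\sigma_1^{-1}(\mathbb{A})\subset\mathbb{A}$ follows from the finite order of $\sigma_1$ is a worthwhile point that the statement leaves implicit. However, the central step
\[
R_{\sigma_1}\bigl(\rho_{F\circ\sigma_2}(T)\bigr)=\rho_{(F\circ\sigma_2)\circ\sigma_1}(T)
\]
is not an instance of the definition, and it is in fact false in general. The operator $R_{\sigma_1}$ is defined on $\rho_F({\rm Mat}(\mathbb{A}))$ only through presentations of the form $\rho_F(S)$, so to apply it to $\rho_{F\circ\sigma_2}(T)$ you must first rewrite that element as $\rho_F(T^{\sigma_2})$, where $T^{\sigma_2}_{\overline{a},\overline{b}}=T_{\sigma_2^{-1}(\overline{a}),\sigma_2^{-1}(\overline{b})}$ on $\sigma_2(\mathbb{A})$ and $0$ elsewhere. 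The definition then gives $R_{\sigma_1}(\rho_F(T^{\sigma_2}))=\rho_{F\circ\sigma_1}(T^{\sigma_2})$, whose generic term is $T_{\overline{m},\overline{n}}E_{F(\sigma_1(\sigma_2(\overline{m}))),F(\sigma_1(\sigma_2(\overline{n})))}$, whereas your expression produces $F(\sigma_2(\sigma_1(\overline{m})))$. These differ whenever $\sigma_1$ and $\sigma_2$ do not commute: for $l=3$, $\sigma_1=(1\,2)$, $\sigma_2=(2\,3)$ and $T=E_{\overline{a},\overline{a}}$, the correct computation lands on the matrix unit indexed by $F((a_3,a_1,a_2))$ while yours lands on the one indexed by $F((a_2,a_3,a_1))$.

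You nevertheless arrive at the stated identity because a second slip cancels the first. The equality $(F\circ\sigma_2)\circ\sigma_1=F\circ(\sigma_2\circ\sigma_1)$ is indeed associativity, but here $\sigma_2\circ\sigma_1$ is the composite of the actions on tuples, and the convention $\sigma(\overline{i}):=(i_{\sigma(1)},\ldots,i_{\sigma(l)})$ defines a \emph{right} action: one checks that $\tau(\sigma(\overline{i}))=(\sigma\tau)(\overline{i})$. Hence $(F\circ\sigma_2)\circ\sigma_1$ is $F$ composed with the action of the group element $\sigma_1\sigma_2$, not $\sigma_2\sigma_1$, and your chain, read consistently, proves $R_{\sigma_1}R_{\sigma_2}=R_{\sigma_1\sigma_2}$ --- the wrong identity. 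The repair is short: carry out the re-presentation $\rho_{F\circ\sigma_2}(T)=\rho_F(T^{\sigma_2})$ explicitly, apply the definition once, substitute $\overline{a}=\sigma_2(\overline{m})$ in the resulting sum, and invoke the right-action identity $\sigma_1(\sigma_2(\overline{m}))=(\sigma_2\sigma_1)(\overline{m})$ to obtain $\rho_{F\circ(\sigma_2\sigma_1)}(T)=R_{\sigma_2\sigma_1}(\rho_F(T))$. Your treatment of the inverse then goes through unchanged, since $\sigma_1\sigma_1^{-1}=\sigma_1^{-1}\sigma_1=e$ makes the order immaterial there.
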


 We obtain immediately from the definition of the map $\rho_F$ the following statement.

\begin{prop}\label{thm:Injectivity}
    If the $F:\mathbb{A}\to\mathbb{Z}$ is injective then the stretching map $\rho_F$ is injective.
\end{prop}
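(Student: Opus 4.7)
The plan is to unpack Definition \ref{def:StretchingMap} directly and exploit injectivity of $F$ to conclude that no two entries of $T$ get collapsed into the same matrix unit under $\rho_F$. Because of this, the entries of $T$ can simply be read off from $\rho_F(T)$, so the map has trivial kernel.

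More concretely, I would proceed as follows. Suppose $T, T' \in {\rm Mat}(\mathbb{A})$ satisfy $\rho_F(T) = \rho_F(T')$. By linearity of the defining sum, $\rho_F(T - T') = 0$, so it suffices to show that $\rho_F(S) = 0$ implies $S = 0$ for any $S \in {\rm Mat}(\mathbb{A})$. Expanding,
\[
\rho_F(S) \;=\; \suml_{\overline{i},\overline{j}\in \mathbb{A}} S_{\overline{i},\overline{j}}\, E_{F(\overline{i}),F(\overline{j})}.
\]
Since $F$ is injective, the assignment $(\overline{i},\overline{j}) \mapsto (F(\overline{i}),F(\overline{j}))$ is injective on $\mathbb{A}\times\mathbb{A}$, so the matrix units $E_{F(\overline{i}),F(\overline{j})}$ appearing in the sum are pairwise distinct elements of the standard basis of ${\rm Mat}(\infty)$.

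Therefore, for any fixed $\overline{i}_0,\overline{j}_0\in\mathbb{A}$, the coefficient of $E_{F(\overline{i}_0),F(\overline{j}_0)}$ in the expansion of $\rho_F(S)$ is exactly $S_{\overline{i}_0,\overline{j}_0}$, with no contribution from any other index pair. The assumption $\rho_F(S)=0$ therefore forces $S_{\overline{i}_0,\overline{j}_0}=0$ for every $\overline{i}_0,\overline{j}_0\in\mathbb{A}$, hence $S=0$ and so $T = T'$.

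There is no genuine obstacle here: the only point that needs a remark is well-definedness, but the paper has already stipulated that whenever $\rho_F(T)$ is written down it is assumed to converge in ${\rm Mat}(\infty)$, and the argument above is purely coefficient-wise and so unaffected by whether $\mathbb{A}$ is finite or infinite.
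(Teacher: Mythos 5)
Your argument is correct and is exactly the one the paper has in mind: the paper states the proposition as following ``immediately from the definition,'' and your coefficient-wise reading of $\rho_F(S)=\suml_{\overline{i},\overline{j}}S_{\overline{i},\overline{j}}E_{F(\overline{i}),F(\overline{j})}$, using that injectivity of $F$ makes the matrix units pairwise distinct, is the standard way to make that immediate claim explicit. No gaps.
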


\section{Convolution operation}

Note that ${\rm Mat(\mathbb{A})}$ is not closed in general under the usual product of matrices. Nevertheless, we can define the following convolution operation on ${\rm Mat}(\mathbb{A})$.
\begin{defin}\label{def:product}
Fix a function $F:\mathbb{A}\to \mathbb{Z}$, let $T^1,T^2\in {\rm Mat}(\mathbb{A})$. Define
\[
(T^1\ast T^2)_{\overline{i},\overline{j}}:=\sum\limits_{\overline{m}\backsim\overline{n}} T^1_{\overline{i},\overline{m}}T^2_{\overline{n},\overline{j}}
\]
\end{defin}
We may need some restrains on $T_1$ and $T_2$ to make $T^1\ast T^2$ to be well-defined if $\mathbb{A}$ is infinite. From now on we assume that $T^1\ast T^2$ is   well-defined when we see such notations.

We have the following result which  shows how  the stretching map $\rho_F$ behaves with respect to the convolution.

\begin{thm}\label{thm:ProductProp} Let  $F:\mathbb{A}\to \mathbb{Z}$ be any function, $\mathbb{A}\subset \mathbb{Z}^l$, 
 $T^1, T^2\in {\rm Mat}(\mathbb{A})$. Then we have 
\[
\rho_F(T^1\ast T^2)=\rho_F(T^1)\rho_F(T^2).
\]
Moreover, for any $x\in \mathbb{C}^{\mathbb{A}}$ and $T\in {\rm Mat}(\mathbb{A})$ we have
\[
\rho_F^0(T \ast x)=\rho_F(T)\rho_F^0(x).
\]
\end{thm}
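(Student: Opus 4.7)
The plan is to prove both identities by direct expansion, using the fact that the equivalence relation $\sim_F$ built into the convolution is precisely what is needed to mimic matrix multiplication after applying $\rho_F$. The key algebraic fact is the product rule for matrix units, namely $E_{a,b}E_{c,d}=\delta_{b,c}E_{a,d}$, which forces the ``contracted'' index to match; since the stretching map records indices only via $F$, matching $F(\overline{m})=F(\overline{n})$ is exactly $\overline{m}\backsim_F \overline{n}$.

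For the first identity, I would start by expanding the right-hand side:
\[
\rho_F(T^1)\rho_F(T^2)
=\suml_{\overline{i},\overline{m},\overline{n},\overline{j}}
T^1_{\overline{i},\overline{m}}T^2_{\overline{n},\overline{j}}\,
E_{F(\overline{i}),F(\overline{m})}E_{F(\overline{n}),F(\overline{j})}.
\]
Applying the matrix-unit product rule, the inner factor equals $\delta_{F(\overline{m}),F(\overline{n})}E_{F(\overline{i}),F(\overline{j})}$, so the sum collapses to those quadruples with $\overline{m}\backsim\overline{n}$. Regrouping the sum over $\overline{i},\overline{j}$ outside, the inner summation is precisely $(T^1\ast T^2)_{\overline{i},\overline{j}}$ by Definition \ref{def:product}. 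That reassembles the left-hand side $\rho_F(T^1\ast T^2)$.

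The second identity is proved by the same bookkeeping. I would expand $\rho_F(T)\rho_F^0(x)=\sum_{\overline{i},\overline{m},\overline{l}} T_{\overline{i},\overline{m}}x_{\overline{l}}\,E_{F(\overline{i}),F(\overline{m})}e_{F(\overline{l})}$ and use $E_{a,b}e_c=\delta_{b,c}e_a$ to restrict to pairs with $\overline{m}\backsim\overline{l}$; the inner sum becomes $(T\ast x)_{\overline{i}}$ as in Definition \ref{def:StretchingMap}, yielding $\rho_F^0(T\ast x)$.

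There is no real obstacle beyond being careful with well-definedness: when $\mathbb{A}$ is infinite, the reindexing/interchange of sums has to make sense, but this is covered by the blanket assumption in the paper that $T^1\ast T^2$, $\rho_F(T)$ and $\rho_F^0(x)$ are assumed to exist whenever written. Under that convention, the manipulation is just a legitimate rearrangement of an absolutely convergent (or finitely supported) sum, and nothing further is needed.
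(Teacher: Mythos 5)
Your proposal is correct and follows essentially the same route as the paper: both expand the product of the images, apply the matrix-unit product rule $E_{a,b}E_{c,d}=\delta_{b,c}E_{a,d}$ (respectively $E_{a,b}e_c=\delta_{b,c}e_a$) to restrict the contracted indices to pairs with $F(\overline{m})=F(\overline{n})$, and regroup to recognize the convolution. Nothing further is needed.
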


\begin{proof}
First equality immediately follows from product rules for the standard basis $E_{ij}$. Indeed, we have
\begin{eqnarray}
\rho_F(T^1)\rho_F(T^2) &=\sum\limits_{\overline{n}_1,\overline{n}_2} T^1_{\overline{n}_1,\overline{n}_2} E_{F(\overline{n}_1),F(\overline{n}_2)} \sum\limits_{\overline{n}_3,\overline{n}_4}T^2_{\overline{n}_3,\overline{n}_4} E_{F(\overline{n}_3),F(\overline{n}_4)}\nonumber\\
&=\sum\limits_{\overline{n}_1,\overline{n}_4} \,\,
\sum\limits_{\overline{n}_2,\overline{n}_3:F(\overline{n}_2)=F(\overline{n}_3)}T^1_{\overline{n}_1,\overline{n}_2}T^2_{\overline{n}_3,\overline{n}_4}E_{F(\overline{n}_1),F(\overline{n}_4)}\nonumber\\
&=\rho_F(T^1\ast T^2).
\end{eqnarray}
For the second equality we similarly have:

\begin{eqnarray}
\rho_F(T)\rho_F^0(x) &=\sum\limits_{\overline{n}_1}\sum\limits_{\overline{n}_2} T_{\overline{n}_1,\overline{n}_2} E_{F(\overline{n}_1),F(\overline{n}_2)} \sum\limits_{\overline{l}}x_{\overline{l}}e_{F(\overline{l})}\nonumber\\
&=\sum\limits_{\overline{n}_1}\left[
\sum\limits_{\overline{l}\backsim\overline{n}_2}T_{\overline{n}_1,\overline{n}_2}x_{\overline{l}}\right]e_{F(\overline{n}_1)}=\rho_F(Tx).
\end{eqnarray}

\end{proof}

Define ${\rm Id}\in {\rm Mat(\mathbb{A})}$ as follows:
\[
{\rm Id}_{\overline{i},\overline{j}}:=\prod\limits_{m=1}^l \delta_{i_m,j_m}.\]

\begin{prop}[Properties of convolution product $\ast$]

\begin{itemize}
\item[(a)] The convolution operation $\ast$ is associative.
\item[(b)] 
\[
(T\ast {\rm Id})_{\overline{i},\overline{j}}=\sum\limits_{\overline{m}\backsim\overline{j}}T_{\overline{i},\overline{m}},\,\,\,\,
({\rm Id} \ast T)_{\overline{i},\overline{j}}
=\sum\limits_{\overline{m}\backsim\overline{i}}T_{\overline{m},\overline{j}}.
\]
\end{itemize}
\end{prop}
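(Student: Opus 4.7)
My plan is to verify both parts by direct index computation from Definition~\ref{def:product}, since Theorem~\ref{thm:ProductProp} only translates associativity of the usual matrix product back to $\ast$ when $\rho_F$ is injective, and the paper explicitly does not assume injectivity.

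For part (a), I expand $((T^1\ast T^2)\ast T^3)_{\overline{i},\overline{j}}$ by first applying the definition to the outer $\ast$, introducing summation indices $\overline{m}_1\sim \overline{n}_1$, and then to the inner $\ast$ inside $T^1\ast T^2$, introducing $\overline{m}_2\sim \overline{n}_2$. This yields the double sum
\[
\sum_{\overline{m}_1\sim\overline{n}_1}\sum_{\overline{m}_2\sim\overline{n}_2}T^1_{\overline{i},\overline{m}_2}\,T^2_{\overline{n}_2,\overline{m}_1}\,T^3_{\overline{n}_1,\overline{j}}.
\]
Doing the analogous expansion of $(T^1\ast(T^2\ast T^3))_{\overline{i},\overline{j}}$ produces exactly the same expression after relabeling, so both orderings agree termwise. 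The only subtlety is to ensure that the two iterated sums can be interchanged; under the blanket well-definedness assumption stated right after Definition~\ref{def:product}, this is automatic (absolute convergence or finite support justifies Fubini).

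For part (b), I substitute ${\rm Id}_{\overline{n},\overline{j}}=\prod_{m=1}^l\delta_{n_m,j_m}=\delta_{\overline{n},\overline{j}}$ into
\[
(T\ast{\rm Id})_{\overline{i},\overline{j}}=\sum_{\overline{m}\sim\overline{n}}T_{\overline{i},\overline{m}}\,{\rm Id}_{\overline{n},\overline{j}}.
\]
The Kronecker delta collapses the sum over $\overline{n}$ by forcing $\overline{n}=\overline{j}$, and the condition $\overline{m}\sim\overline{n}$ then reduces to $\overline{m}\sim \overline{j}$, giving the claimed formula. The identity for ${\rm Id}\ast T$ is symmetric: the delta now collapses $\overline{m}=\overline{i}$, leaving the constraint $\overline{n}\sim\overline{i}$ on the remaining summation variable.

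I do not expect any serious obstacle here; both statements are essentially bookkeeping with the equivalence relation $\sim_F$. The only point that requires a bit of attention is the interchange of sums in (a) in the infinite case, but this is covered by the standing well-definedness convention.
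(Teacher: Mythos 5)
Your proof is correct and follows essentially the same route as the paper: part (a) is the same direct expansion of both groupings into the double sum $\sum_{\overline{k}\backsim\overline{l}}\sum_{\overline{m}\backsim\overline{n}}T^1_{\overline{i},\overline{l}}T^2_{\overline{k},\overline{m}}T^3_{\overline{n},\overline{j}}$, and part (b) is the delta-collapse the paper dismisses as immediate from the definition. Your extra remarks on interchanging the sums in the infinite case and on not relying on injectivity of $\rho_F$ are sound but not needed beyond the paper's standing well-definedness convention.
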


\begin{proof}
\begin{trivlist}
a) Indeed, we have
\[
\left[T^1\ast(T^2\ast T^3)\right]_{\overline{i},\overline{j}}=\sum\limits_{\overline{k}\backsim\overline{l}}\sum\limits_{\overline{m}\backsim\overline{n}}T^1_{\overline{i},\overline{l}}T^2_{\overline{k},\overline{m}}T^3_{\overline{n},\overline{j}}=\left[(T^1\ast T^2)\ast T^3\right]_{\overline{i},\overline{j}}.
\]
b) Immediately follows from the definition of $\ast$.
\end{trivlist}
\end{proof}
\begin{defin}
Define the adjoint operator   $\star: {\rm Mat}(\mathbb{A})\mapsto  {\rm Mat}(\mathbb{A})$ by
\[
(\star T)_{\overline{i},\overline{j}}:= T_{\overline{j},\overline{i}}.
\]
\end{defin}

We have the following property of the adjoint operator.

\begin{lem}\label{lem:Adjoint} For $T^1,T^2\in  {\rm Mat}(\mathbb{A})$ we have
\[
\left[\rho_F(T^2\ast T^1)\right]^{{\mathrm{t}}}=\rho_F((\star T^1)\ast (\star T^2)),
\]
where $^{{\mathrm{t}}}$ means the usual transpose of the matrix.
\end{lem}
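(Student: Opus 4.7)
The plan is to reduce the identity to two applications of Theorem \ref{thm:ProductProp} connected by a single compatibility lemma between $\star$ and transposition.

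First I would observe the elementary fact that the stretching map intertwines the adjoint $\star$ with the usual matrix transpose, that is,
\[
\rho_F(T)^{\mathrm{t}} = \rho_F(\star T)
\]
for every $T \in {\rm Mat}(\mathbb{A})$. This is a one-line computation from the definition of $\rho_F$: using $E_{F(\overline{i}),F(\overline{j})}^{\mathrm{t}} = E_{F(\overline{j}),F(\overline{i})}$ and then relabeling the summation indices $\overline{i} \leftrightarrow \overline{j}$, the coefficient in front of $E_{F(\overline{i}),F(\overline{j})}$ becomes $T_{\overline{j},\overline{i}} = (\star T)_{\overline{i},\overline{j}}$.

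Next I would chain together the multiplicativity of $\rho_F$ established in Theorem \ref{thm:ProductProp}. Namely,
\[
\bigl[\rho_F(T^2 \ast T^1)\bigr]^{\mathrm{t}} = \bigl[\rho_F(T^2)\rho_F(T^1)\bigr]^{\mathrm{t}} = \rho_F(T^1)^{\mathrm{t}}\,\rho_F(T^2)^{\mathrm{t}},
\]
where the first equality is Theorem \ref{thm:ProductProp} and the second is the standard reversal-of-order rule for transposition of a matrix product. Applying the compatibility identity above to each factor gives $\rho_F(\star T^1)\rho_F(\star T^2)$, and one more application of Theorem \ref{thm:ProductProp} (in the reverse direction) yields $\rho_F\bigl((\star T^1)\ast (\star T^2)\bigr)$, as desired.

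The argument has essentially no obstacle: the whole content is the compatibility $\rho_F \circ \star = {\mathrm{t}} \circ \rho_F$, and everything else is formal manipulation. The only minor point to be careful about is the well-definedness of the infinite sums when $\mathbb{A}$ is infinite, but this is already swept into the standing hypothesis of the excerpt that $\rho_F(T)$ and the convolution products involved are well-defined, so no additional argument is needed.
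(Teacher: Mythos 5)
Your proof is correct, and since the paper's own proof is just the phrase ``direct calculation,'' your argument is a legitimate (and cleanly organized) way of carrying that calculation out: the intertwining identity $\rho_F(\star T)=\rho_F(T)^{\mathrm{t}}$ is verified exactly as you say, and the rest follows formally from Theorem \ref{thm:ProductProp} together with the reversal of order under transposition. No gaps; the well-definedness caveat is indeed already covered by the paper's standing assumptions.
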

\begin{proof}
Direct calculation.
\end{proof}
Suppose now that the set $\mathbb{A}$ is finite. If $T\in  {\rm Mat}(\mathbb{A})$ then $\rho_F(T)$ is an infinite matrix. 
Let  $\rho_F^{\mathbb{A}}(T)$ be the submatrix of  $\rho_F(T)$ with the rows and columns  corresponding to the set $F(\mathbb{A})$. Hence the matrix $\rho_F^{\mathbb{A}}(T)$ is a finite square matrix for any $T\in  {\rm Mat}(\mathbb{A})$.

\begin{cor}\label{cor:Hyperdet}
Assume that $\mathbb{A}$ is finite. Define the map $\kappa: {\rm Mat}(\mathbb{A})\to\mathbb{C}$ such that $\kappa(T)=\det \rho_F^{\mathbb{A}}(T)$ for $T\in {\rm Mat}(\mathbb{A})$. Then
\begin{equation}\label{eqn:Hyperdet}
\kappa(T^1\ast T^2)=\kappa(T^1)\kappa(T^2), \,\, T^1,T^2\in {\rm Mat}(\mathbb{A}).
\end{equation}
\end{cor}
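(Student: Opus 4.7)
The plan is to reduce the claim to Theorem \ref{thm:ProductProp} together with the multiplicativity of the determinant for finite square matrices. The only nontrivial point is justifying that passing from the infinite matrix $\rho_F(T)$ to its finite submatrix $\rho_F^{\mathbb{A}}(T)$ commutes with the product, and this is the step that deserves care.

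First, I would observe that for any $T\in {\rm Mat}(\mathbb{A})$, the matrix $\rho_F(T)=\sum_{\overline{i},\overline{j}\in\mathbb{A}} T_{\overline{i},\overline{j}}\,E_{F(\overline{i}),F(\overline{j})}$ has all of its (possibly) nonzero entries in rows and columns indexed by $F(\mathbb{A})$; outside the block $F(\mathbb{A})\times F(\mathbb{A})$ it is identically zero. Since $\mathbb{A}$ is finite, so is $F(\mathbb{A})$, and $\rho_F^{\mathbb{A}}(T)$ is a bona fide finite square matrix.

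Next, I would apply Theorem \ref{thm:ProductProp} to get $\rho_F(T^1\ast T^2)=\rho_F(T^1)\rho_F(T^2)$. When we multiply the two infinite matrices on the right, every intermediate summation index outside $F(\mathbb{A})$ contributes zero by the preceding observation, so
\[
\bigl[\rho_F(T^1)\rho_F(T^2)\bigr]_{p,q}=\sum_{r\in F(\mathbb{A})}\rho_F(T^1)_{p,r}\,\rho_F(T^2)_{r,q},\quad p,q\in F(\mathbb{A}).
\]
Restricting both sides of Theorem \ref{thm:ProductProp} to $F(\mathbb{A})\times F(\mathbb{A})$ therefore yields the finite-matrix identity $\rho_F^{\mathbb{A}}(T^1\ast T^2)=\rho_F^{\mathbb{A}}(T^1)\,\rho_F^{\mathbb{A}}(T^2)$.

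Finally, taking determinants on both sides and invoking the multiplicativity $\det(AB)=\det(A)\det(B)$ for finite square matrices gives
\[
\kappa(T^1\ast T^2)=\det \rho_F^{\mathbb{A}}(T^1\ast T^2)=\det\rho_F^{\mathbb{A}}(T^1)\cdot\det\rho_F^{\mathbb{A}}(T^2)=\kappa(T^1)\kappa(T^2),
\]
which is \eqref{eqn:Hyperdet}. The main (and really only) obstacle is the bookkeeping in the previous paragraph: one has to be explicit that the restriction of an infinite matrix product to the block $F(\mathbb{A})\times F(\mathbb{A})$ agrees with the product of the restrictions, which works precisely because the supports of $\rho_F(T^1)$ and $\rho_F(T^2)$ already lie inside that block.
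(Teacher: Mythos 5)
Your proposal is correct and follows exactly the route the paper intends: the corollary is stated without proof as an immediate consequence of Theorem \ref{thm:ProductProp} together with multiplicativity of the determinant, and your careful justification that restriction to the block $F(\mathbb{A})\times F(\mathbb{A})$ commutes with the matrix product (because both factors are supported there) is precisely the bookkeeping the authors leave implicit.
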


\medskip

\begin{example}[Tensor product map]\label{ex:ex_1}
Suppose that the set $\mathbb{A}$ has a ``rectangular" shape, that is 
$$\mathbb{A}=\prodl_{i=1}^l\mathbb{A}_i,$$ 
where $\mathbb{A}_i=\{0,1, \ldots, n_i-1\}$, $i=1, \ldots, l$. Fix $\overline{k}\in \mathbb{A}$ such that 
$\overline{k}=(k_1, k_2,\ldots, k_l)$ with $k_1=1,k_2=n_1$, $k_3=n_1n_2$, $\ldots$, $k_{l}=n_1 n_2\ldots n_{l-1}$
and  consider the function $F_{\overline{k}}$:  $F_{\overline{k}}(\overline{i})=\overline{k}\cdot\overline{i}, \,\, \overline{i}\in \mathbb{A}$.
Then $\overline{m}\backsim\overline{n}$ is equivalent to the equality $\overline{m}=\overline{n}$ by the uniqueness of division with remainder. Consequently, this case corresponds to the tensor product map which will be considered in more detail in the next section.

\end{example}

If $\overline{k}$ satisfies the conditions in Example \ref{ex:ex_1}, then we denote the map $F_{\overline{k}}$ 
by $F_{\rm{TP}}$. Note that $F_{\rm{TP}}(\mathbb{A})=\{0, \ldots, n_1n_2\ldots n_l -1 \}$.

\begin{rem}
If each equivalence class is a singleton for the relation $\backsim$, as in the case of Example \ref{ex:ex_1}, then the element $\rm{Id}$ is an identity for the product $\ast$.
\end{rem}

\medskip

\section{Tensor product map}\label{sec:TensorProduct}

In this section we consider the tensor product map. Let $n_1, \ldots, n_l$ be positive integers, and 
$\mathbb{A}$ the corresponding rectangular set: 
 $$\mathbb{A}=\prodl_{i=1}^l \{0, \ldots, n_i-1\}\subset \Z^l.$$

For arbitrary matrices $A_k=(A^k_{ij})\in {\rm Mat}(n_k)$, $0\leq i,j\leq n_k-1$, $k=1, \ldots, l$, consider an $l$-tensor 
$T=A_1\otimes A_2\otimes\ldots\otimes A_l\in \bigotimes\limits_{s=1}^l {\rm Mat}(n_s)$. Then $T$ can be viewed as a matrix in Mat$(\mathbb{A})$:
\[
T_{\mathbf{i},\mathbf{j}}=T_{i_1,j_1,i_2,j_2,\ldots,i_l,j_l}=A^1_{i_1,j_1}A^2_{i_2,j_2}\ldots A^l_{i_l,j_l},
\]
where $\mathbf{i}=(i_1,\ldots,i_l)$, $\mathbf{j}=(j_1,\ldots,j_l)$, $0\leq i_s,j_s\leq n_s-1$, $s=1,\ldots,l$.

Then the \emph{tensor product} map $$\rho_{n_1, \ldots, n_l}: \bigotimes\limits_{s=1}^l {\rm Mat}(n_s) \rightarrow {\rm Mat}(n_1n_2\ldots n_l)$$ is defined explicitly as follows: 

\[
\rho_{n_1, \ldots, n_l}(T)=\sum\limits_{i_1,j_1=0}^{n_1-1}\sum\limits_{i_2,j_2=0}^{n_2-1}\ldots\sum\limits_{i_l,j_l=0}^{n_l-1}T_{i_1,j_1,i_2,j_2,\ldots,i_l,j_l}E^{N}_{I,J},
\]
where $N=n_1n_2\ldots n_l$, $I=i_1+n_1 i_2+n_1 n_2 i_3+\ldots +n_1\ldots n_{l-1} i_l$, $J=j_1+n_1 j_2+n_1 n_2 j_3+\ldots +n_1\ldots n_{l-1} j_l$.

Consider 
 the   stretching map $\rho_{\rm{TP}}=\rho_{F_{\rm{TP}}}$ and the induced map 
 $$\rho_{\rm{TP}}^f: {\rm Mat}(\mathbb{A})\rightarrow {\rm Mat} (n_1 \ldots n_l).$$
 Identifying an $l$-tensor 
$T$ with the matrix $(T_{\mathbf{i},\mathbf{j}})_{\mathbf{i},\mathbf{j}\in \mathbb{A}}$, we can view $\bigotimes\limits_{s=1}^l {\rm Mat}(n_s)$ as a subspace of
${\rm Mat}(\mathbb{A})$. 
We have $\rho_{\rm{TP}}^f(T)=\rho_{n_1, \ldots, n_l}(T)$. Hence,
 the tensor product map $\rho_{n_1, \ldots, n_l}$ is induced by the restriction of $\rho_{\rm{TP}}^f$ on $\bigotimes\limits_{s=1}^l {\rm Mat}(n_s)$.


Similarly, we have the following local form of the vector stretching  map.
Let $x\in \bigotimes\limits_{s=1}^l \mathbb{C}^{n_s}$ be an $l$-tensor. If $\mathbf{i}=(i_1,\ldots,i_l)$, then we set 
$x_{\mathbf{i}}=\{x_{i_1,i_2,\ldots,i_l}\}$, $0\leq i_s\leq n_s-1$, $s=1,\ldots,l$. Then the correspondence $x\mapsto (x_{\mathbf{i}})$  
defines the
 map $$\rho^0_{n_1, \ldots, n_l}:\bigotimes\limits_{s=1}^l\mathbb{C}^{n_s}\to \mathbb{C}^{n_1n_2\ldots n_l},$$ which can be written explicitly as follows:
\[
\rho^0_{n_1, \ldots, n_l}(x)=\sum\limits_{i_1=0}^{n_1-1}\sum\limits_{i_2=0}^{n_2-1}\ldots\sum\limits_{i_l=0}^{n_l-1}x_{i_1,i_2,\ldots,i_l}e^{N}_{I},
\]
where $N=n_1n_2\ldots n_l$, $I=i_1+n_1 i_2+n_1 n_2 i_3+\ldots +n_1\ldots n_{l-1} i_l$. Identifying $\bigotimes\limits_{s=1}^l\mathbb{C}^{n_s}$ with $\mathbb{C}^{\mathbb{A}}$, we  see that 
 the  map $\rho^0_{n_1, \ldots, n_l}$ coincides with $\rho_{F_{\rm{TP}}}^0$.

The convolution operation on $\bigotimes\limits_{s=1}^l \rm{Mat}(n_s)$ will have the following form:
\[
(T^1\ast T^2)_{\mathbf{i},\mathbf{j}}=\suml_{\mathbf{m}} T^1_{\mathbf{i},\mathbf{m}} T^2_{\mathbf{m},\mathbf{j}},\,\,T^1,T^2\in \bigotimes\limits_{s=1}^l \rm{Mat}(n_s).
\]
From Theorem \ref{thm:ProductProp}  we immediately have 

\begin{cor}\label{cor:ProductRule_1} 
\item[(a)] For any $T^1,T^2\in \bigotimes\limits_{s=1}^l {\rm Mat}(n_s)$ we have
\[
\rho_{n_1, \ldots, n_l}(T^1\ast T^2)=\rho_{n_1, \ldots, n_l}(T^1)\rho_{n_1, \ldots, n_l}(T^2).
\]
\item[(b)] For any $x\in \bigotimes\limits_{s=1}^l \mathbb{C}^{n_s}$ and  $T\in \bigotimes\limits_{s=1}^l {\rm Mat}(n_s)$ we have
\[
\rho^0_{n_1, \ldots, n_l} (T\ast x)=\rho_{n_1, \ldots, n_l}(T)\rho^0_{n_1, \ldots, n_l}(x).
\]
\item[(c)] Let $\kappa:\bigotimes\limits_{s=1}^l {\rm Mat}(n_s)\to\mathbb{C}$, $\kappa(T)=\det \rho_{n_1, \ldots, n_l}(T)$. Then
\begin{equation}\label{eqn:Hyperdet_2}
\kappa(T^1\ast T^2)=\kappa(T^1)\kappa(T^2), \,\, T^1,T^2\in\bigotimes\limits_{s=1}^l {\rm Mat}(n_s).
\end{equation}
\end{cor}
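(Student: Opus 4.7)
The plan is to observe that all three parts follow essentially for free from Theorem~\ref{thm:ProductProp} applied to the function $F = F_{\rm{TP}}$ on the rectangular set $\mathbb{A} = \prod_{i=1}^l \{0,\ldots,n_i-1\}$, combined with the identifications already established in Section~\ref{sec:TensorProduct}. Concretely, I would first recall that $\bigotimes_{s=1}^l {\rm Mat}(n_s)$ has been embedded as a subspace of ${\rm Mat}(\mathbb{A})$ via $T \mapsto (T_{\mathbf{i},\mathbf{j}})$, that $\bigotimes_{s=1}^l \mathbb{C}^{n_s}$ is similarly identified with $\mathbb{C}^{\mathbb{A}}$, and that $\rho_{n_1,\ldots,n_l}$ and $\rho^0_{n_1,\ldots,n_l}$ agree with $\rho_{\rm{TP}}^f$ and $\rho_{F_{\rm{TP}}}^0$ under these identifications.

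For part (a), I would invoke Theorem~\ref{thm:ProductProp} with $F = F_{\rm{TP}}$ to get $\rho_{F_{\rm{TP}}}(T^1 \ast T^2) = \rho_{F_{\rm{TP}}}(T^1)\rho_{F_{\rm{TP}}}(T^2)$, and then translate this identity through the identification above. The one small verification is that under $F_{\rm{TP}}$ the equivalence classes of $\backsim$ are singletons (as noted in Example~\ref{ex:ex_1} and the subsequent remark), so the convolution $\ast$ on tensors reduces to the familiar formula $(T^1\ast T^2)_{\mathbf{i},\mathbf{j}} = \sum_{\mathbf{m}} T^1_{\mathbf{i},\mathbf{m}} T^2_{\mathbf{m},\mathbf{j}}$ displayed just before the corollary, and that $F_{\rm{TP}}$ restricts to a bijection $\mathbb{A} \to \{0,1,\ldots,n_1\cdots n_l - 1\}$, so the passage from $\rho_{F_{\rm{TP}}}$ (an infinite matrix) to $\rho_{n_1,\ldots,n_l}$ (an $N\times N$ matrix with $N = n_1\cdots n_l$) loses no information. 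Part (b) is completely analogous, invoking the second identity of Theorem~\ref{thm:ProductProp} for the vector stretching map.

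For part (c), I would simply take determinants on both sides of the matrix identity in (a) and use the multiplicativity $\det(AB) = \det(A)\det(B)$ of the determinant of finite square matrices, which applies because the rectangular finiteness of $\mathbb{A}$ guarantees $\rho_{n_1,\ldots,n_l}(T^i)$ are $N\times N$ matrices. This is also a special case of Corollary~\ref{cor:Hyperdet}.

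I do not anticipate any real obstacle: the corollary is flagged as an immediate consequence of Theorem~\ref{thm:ProductProp}, and the content of the proof is the bookkeeping needed to see that the tensor-product setting is a finite, injective instance of the general stretching-map framework. The only point where one must be a little careful is matching index conventions, ensuring that $I = i_1 + n_1 i_2 + \cdots + n_1\cdots n_{l-1} i_l$ is exactly $F_{\rm{TP}}(\mathbf{i}) = \overline{k}\cdot \mathbf{i}$ with $\overline{k}$ as in Example~\ref{ex:ex_1}, so that $\rho_{F_{\rm{TP}}}$ restricted to tensors reproduces the explicit summation formula for $\rho_{n_1,\ldots,n_l}$.
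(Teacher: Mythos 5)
Your proposal is correct and follows exactly the route the paper takes: the paper gives no separate proof, simply stating that the corollary follows immediately from Theorem \ref{thm:ProductProp} once $\bigotimes_{s=1}^l {\rm Mat}(n_s)$ is identified with a subspace of ${\rm Mat}(\mathbb{A})$ and $\rho_{n_1,\ldots,n_l}$ with the restriction of $\rho_{F_{\rm TP}}$, with part (c) likewise being the finite-determinant specialization already recorded in Corollary \ref{cor:Hyperdet}. The bookkeeping you supply (singleton equivalence classes, bijectivity of $F_{\rm TP}$ onto $\{0,\ldots,n_1\cdots n_l-1\}$) is precisely the content the paper leaves implicit.
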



As a consequence we have the following properties of the map $\rho_{n_1, \ldots, n_l}$.

\begin{cor}\label{cor:RhoProperties}
\begin{trivlist}
\item[(a)] Let $A_s,B_s\in {\rm Mat}(n_s)$, $s=1,\ldots,l$. Then
\[
\rho_{n_1, \ldots, n_l}(A_1 B_1\otimes A_2 B_2\otimes\ldots\otimes A_lB_l)=\rho_{n_1, \ldots, n_l}(A_1\otimes A_2\otimes\ldots\otimes A_l)\rho_{n_1, \ldots, n_l}(B_1\otimes B_2\otimes\ldots\otimes B_l).
\]\item[(b)] Let $I_k$ denote the identity matrix of size $k\times k$. Then
 \[
\rho_{n_1, \ldots, n_l}(I_{n_1}\otimes I_{n_2}\otimes\ldots\otimes I_{n_l} )=I_{n_1 n_2\ldots n_l}.
\]
\item[(c)] The map
$\rho_{n_1, \ldots, n_l}:\bigotimes\limits_{s=1}^l {\rm Mat}(n_s)\to {\rm Mat}(n_1n_2\ldots n_l)$ is an associative algebra isomorphism. 

\end{trivlist}
\end{cor}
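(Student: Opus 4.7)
My plan is to derive all three parts from Theorem \ref{thm:ProductProp} (more precisely its specialization Corollary \ref{cor:ProductRule_1}) together with the observation — made in Example \ref{ex:ex_1} and the remark following it — that for $F=F_{\rm{TP}}$ the equivalence $\overline{m}\backsim\overline{n}$ collapses to equality, so the convolution $\ast$ on $\bigotimes_{s=1}^{l}\mathrm{Mat}(n_s)$ is just ordinary index-summing matrix multiplication indexed by $\mathbb{A}$.

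For part (a), the key step is to verify at the level of entries that
\[
(A_1\otimes\cdots\otimes A_l)\ast(B_1\otimes\cdots\otimes B_l)=A_1B_1\otimes\cdots\otimes A_lB_l.
\]
Using that $T^1_{\mathbf{i},\mathbf{m}}=\prod_s A^s_{i_s,m_s}$ and $T^2_{\mathbf{m},\mathbf{j}}=\prod_s B^s_{m_s,j_s}$, the sum $\sum_{\mathbf{m}}$ factors as $\prod_s \sum_{m_s} A^s_{i_s,m_s}B^s_{m_s,j_s}=\prod_s (A_sB_s)_{i_s,j_s}$. Then Corollary \ref{cor:ProductRule_1}(a) turns $\ast$ on the left-hand side into the ordinary matrix product of the $\rho_{n_1,\ldots,n_l}$-images on the right.

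Part (b) is a direct entry computation: the tensor $I_{n_1}\otimes\cdots\otimes I_{n_l}$ has entries $\prod_s \delta_{i_s,j_s}$, which equals $1$ precisely when $\mathbf{i}=\mathbf{j}$, i.e.\ when $I=J$ under the bijective encoding $I=i_1+n_1 i_2+\cdots+n_1\cdots n_{l-1}i_l$. Hence all and only the diagonal entries of $\rho_{n_1,\ldots,n_l}(I_{n_1}\otimes\cdots\otimes I_{n_l})$ are $1$.

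For part (c), linearity is immediate from the defining formula, multiplicativity on elementary tensors is (a), and this extends to all of $\bigotimes_{s=1}^l\mathrm{Mat}(n_s)$ by bilinearity of both sides (the algebra product on the tensor product is the bilinear extension of $(A_1\otimes\cdots)(B_1\otimes\cdots)=A_1B_1\otimes\cdots$, which matches $\ast$ by the computation in (a), and $\ast$ is bilinear by inspection of Definition \ref{def:product}). Preservation of the identity is (b). Finally, bijectivity: since $F_{\rm{TP}}$ is injective on $\mathbb{A}$ (uniqueness of the mixed-radix expansion), Proposition \ref{thm:Injectivity} gives injectivity of $\rho_{\rm{TP}}$, hence of $\rho_{n_1,\ldots,n_l}$; a dimension count $\dim\bigotimes_s\mathrm{Mat}(n_s)=\prod_s n_s^2=(n_1\cdots n_l)^2=\dim\mathrm{Mat}(n_1\cdots n_l)$ then forces surjectivity. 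I do not anticipate a real obstacle here; the only mildly delicate point is making the identification $\bigotimes_s\mathrm{Mat}(n_s)\hookrightarrow\mathrm{Mat}(\mathbb{A})$ explicit enough that the algebra structure on the tensor side matches $\ast$, so that Corollary \ref{cor:ProductRule_1} applies verbatim.
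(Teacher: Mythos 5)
Your proposal is correct and follows essentially the same route as the paper: parts (a) and (b) are reduced to Corollary \ref{cor:ProductRule_1} via the entry-level identification of $\ast$ with the factorwise matrix product on elementary tensors, and part (c) rests on the injectivity/bijectivity of the mixed-radix encoding $I=i_1+n_1i_2+\cdots+n_1\cdots n_{l-1}i_l$. The paper's own proof is just a terser version of this; your added details (the factorization of $\sum_{\mathbf m}$, the dimension count for surjectivity) are exactly the steps it leaves implicit.
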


\begin{proof}
Statements (a) and (b) follow from Corollary \ref{cor:ProductRule_1}, while (c) follows from the fact that given 
$i_1, \ldots, i_l$ such that
$0\leq i_s\leq n_s-1$, $s=1,\ldots,l$, any $n\in \mathbb{Z}$ between $0$ and $n_1n_2\ldots n_l-1$ can be uniquely represented in the form
\[
n=i_1+n_1 i_2+n_1 n_2 i_3+\ldots +n_1\ldots n_{l-1} i_l.
\]

\end{proof}

%
Let us remind the reader that the adjoint  operator on the tensor algebra $$\star:\bigotimes\limits_{s=1}^l {\rm Mat}(n_s)\mapsto \bigotimes\limits_{s=1}^l {\rm Mat}(n_s)$$  has the form:
\[
(\star T)_{\mathbf{i}, \mathbf{j}}:= T_{\mathbf{j}, \mathbf{i}},\,T\in \bigotimes\limits_{s=1}^l {\rm Mat}(n_s).
\]
Denote by $\rho^*_{n_1, \ldots, n_l}(T)$ the adjoint element of $\rho_{n_1, \ldots, n_l}(T)$ in the algebra ${\rm Mat}(n_1n_2\ldots n_l)$.
We have the following properties of the $\star$ map.

\medskip

Let  $C, D\in {\rm Mat}(n)$ for some $n\in \mathbb N\cup \{\infty\}$. Recall that these matrices are  similar and write $C\backsim D$ if there exists an ivertible  matrix $U\in {\rm Mat}(n)$ such that $D=U C U^{-1}$. We have the following corollary.

\begin{cor}\label{cor:RhoProperties-star}
\begin{trivlist}
\item[(a)] For any $T\in \bigotimes\limits_{s=1}^l {\rm Mat}(n_s)$
\[
(\rho_{n_1, \ldots, n_l}(T))^{\rm{t}}=\rho_{n_l, \ldots, n_1}(\star T).
\]
\item[(b)]
Let $C_i, D_i\in {\rm Mat}(n_i)$, $i=1,\ldots,l$ such that $C_i$ is similar to $D_i$  for all $i$. Then $\rho_{n_1, \ldots, n_l}(C_1\otimes\ldots \otimes C_l)$ and $\rho_{n_1, \ldots, n_l}(D_1\otimes\ldots \otimes D_l)$ are similar.
\end{trivlist}
\end{cor}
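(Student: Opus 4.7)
The plan is to handle the two parts separately, drawing on results already established in the section, with the tensor-product case doing all the heavy lifting.

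For part (a), the cleanest route is direct entry-level verification. For $T\in \bigotimes_{s=1}^l \mathrm{Mat}(n_s)$ and multi-indices $\overline{i},\overline{j}$, the definition of $\rho_{n_1,\ldots,n_l}$ places $T_{i_1,j_1,\ldots,i_l,j_l}$ at position $(I,J)$ with $I = i_1 + n_1 i_2 + \cdots + n_1\cdots n_{l-1} i_l$ and $J$ analogous. Taking the transpose swaps the roles of $I$ and $J$, and one then matches the resulting coefficients against $\rho_{n_l,\ldots,n_1}(\star T)$, whose stretching reads the factors in the reverse order $n_l, n_l n_{l-1},\ldots$ and uses $(\star T)_{\overline{i},\overline{j}} = T_{\overline{j},\overline{i}}$; the two sides coincide termwise. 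Alternatively, and perhaps more elegantly, (a) is a direct corollary of Lemma \ref{lem:Adjoint} applied with $T^2=\mathrm{Id}$: because for the tensor-product function $F_{\mathrm{TP}}$ the equivalence classes under $\sim$ are singletons (cf.\ the remark following Example \ref{ex:ex_1}), $\mathrm{Id}$ is a genuine two-sided identity for $\ast$, and since $\star\mathrm{Id}=\mathrm{Id}$ one obtains $(\rho_F(T))^t=\rho_F(\star T)$ immediately.

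For part (b), the argument is essentially a one-liner from Corollary \ref{cor:RhoProperties}. By hypothesis choose invertible $U_i$ with $D_i = U_i C_i U_i^{-1}$, and set $U = U_1\otimes\cdots\otimes U_l$, $V = U_1^{-1}\otimes\cdots\otimes U_l^{-1}$, $C = C_1\otimes\cdots\otimes C_l$, and $D = D_1\otimes\cdots\otimes D_l$. Two successive applications of Corollary \ref{cor:RhoProperties}(a) give
\[
\rho_{n_1,\ldots,n_l}(D) \;=\; \rho_{n_1,\ldots,n_l}(U)\,\rho_{n_1,\ldots,n_l}(C)\,\rho_{n_1,\ldots,n_l}(V).
\]
Combining parts (a) and (b) of Corollary \ref{cor:RhoProperties} yields $\rho_{n_1,\ldots,n_l}(U)\rho_{n_1,\ldots,n_l}(V) = \rho_{n_1,\ldots,n_l}(I_{n_1}\otimes\cdots\otimes I_{n_l}) = I_{n_1\cdots n_l}$, and likewise for the product in the opposite order, so $\rho_{n_1,\ldots,n_l}(V)$ is the two-sided inverse of $\rho_{n_1,\ldots,n_l}(U)$. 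This exhibits $\rho_{n_1,\ldots,n_l}(D)$ as a conjugate of $\rho_{n_1,\ldots,n_l}(C)$, as required.

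The only subtlety I anticipate is in part (a), where one must pin down precisely how $\star T$, originally sitting in $\bigotimes_{s=1}^l \mathrm{Mat}(n_s)$, is to be interpreted as an element on which the reversed-order stretching $\rho_{n_l,\ldots,n_1}$ acts; once one fixes the convention of reading the tensor factors in reverse so that the weights $n_l,n_l n_{l-1},\ldots$ replace $n_1, n_1 n_2,\ldots$, the verification is entirely formal. Part (b) presents no obstacle beyond invoking the homomorphism property, since inverses of tensor-product factors transport through $\rho$ by the same computation that proved Corollary \ref{cor:RhoProperties}.
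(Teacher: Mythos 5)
Your part (b) is correct and is essentially the paper's own argument. The paper conjugates by $\prod_{i}\rho_{n_1,\ldots,n_l}(I\otimes\cdots\otimes U_i\otimes\cdots\otimes I)$, which by Corollary \ref{cor:RhoProperties}(a) equals your $\rho_{n_1,\ldots,n_l}(U_1\otimes\cdots\otimes U_l)$, so the two proofs differ only in packaging; your version is if anything cleaner, since you work with invertible $U_i$ (as the definition of similarity in the text requires) rather than the unitary matrices the paper's proof gratuitously introduces, and you justify the inverse via the homomorphism property explicitly.

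Part (a) contains a genuine problem that you half-notice and then argue past. Your two routes prove two \emph{different} identities. The route via Lemma \ref{lem:Adjoint} with $T^2={\rm Id}$ yields $(\rho_F(T))^{\mathrm{t}}=\rho_F(\star T)$ with the \emph{same} stretching function on both sides, i.e.\ $(\rho_{n_1,\ldots,n_l}(T))^{\mathrm{t}}=\rho_{n_1,\ldots,n_l}(\star T)$; this is the identity that really does hold entrywise, because $\star$ only swaps the row and column multi-indices and leaves the order of the tensor factors alone. Your first route, in which $\rho_{n_l,\ldots,n_1}(\star T)$ is read by ``reversing the factors'' with weights $1,n_l,n_ln_{l-1},\ldots$, does \emph{not} give termwise coincidence: the enumerations $(i_1,\ldots,i_l)\mapsto i_1+n_1i_2+\cdots$ and $(i_l,\ldots,i_1)\mapsto i_l+n_li_{l-1}+\cdots$ are different bijections onto $\{0,\ldots,n_1\cdots n_l-1\}$, so the reversed-order matrix is only permutation-similar to $(\rho_{n_1,\ldots,n_l}(T))^{\mathrm{t}}$, not equal to it. Concretely, for $l=2$, $n_1=n_2=2$, $T=E^2_{01}\otimes I_2$ one gets $(\rho_{2,2}(T))^{\mathrm{t}}=E_{1,0}+E_{3,2}=\rho_{2,2}(E^2_{10}\otimes I_2)$, while the reversed reading gives $\rho_{2,2}(I_2\otimes E^2_{10})=E_{2,0}+E_{3,1}$. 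This is consistent with Lemma \ref{lem:SumTensor}(a) and the remark following Theorem \ref{thm:JF}, which emphasize that reversing factor order changes the matrix and preserves it only up to similarity. The paper's proof of (a) is a one-line ``immediate from the definition,'' so it gives no guidance here; you need to commit to one reading of the right-hand side --- either the subscript order is immaterial and your Lemma \ref{lem:Adjoint} argument is already a complete proof, or the factors are genuinely reversed and the asserted equality must be weakened to equality up to a permutation of rows and columns.
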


\begin{proof}
Statement (a) follows immediately from the definition of the $\star$ operator.
Let us show part (b). We have $D_i=U_i C_i U_i^*$, $i=1,\ldots,l$, where $U_i\in {\rm Mat}(k_i)$ is a unitary matrix. Then by part (a) and Corollary \ref{cor:RhoProperties-star},(a) we have 
\[
\rho_{n_1, \ldots, n_l}(D_1\otimes D_2\otimes\ldots\otimes D_l)=\]
\[\prod\limits_{i=1}^l\rho_{n_1, \ldots, n_l}(I\otimes\ldots\otimes U_i\otimes\ldots I)\rho_{n_1, \ldots, n_l}(C_1\otimes C_2\otimes\ldots\otimes C_l)\left[\prod\limits_{i=1}^l \rho_{n_1, \ldots, n_l}(I\otimes\ldots\otimes U_i\otimes\ldots I)\right]^{-1}.
\]
Hence, the result follows.
\end{proof}

\begin{example}
Let $l=2$. Then easy calculation demonstrates that the map $${\rm Mat}(n)\ni A\mapsto \rho_{k,n}({\rm Id}_{{\rm Mat}(k)}\otimes A)\in {\rm Mat}(kn)$$ is an algebraic homomorphism, whose image has ``stretches" of $A$'s  $k$ times along both diagonal and perpendicular diagonal.  This example justifies  the name of the stretching map.
\end{example}
We have the following statement.

\begin{prop} Let $B_i\in {\rm Mat}(n_i)$, $i=1,\ldots,l$. Then
\begin{itemize}
    \item[(a)] 
\[
R_{\sigma}(\rho_{n_1, \ldots, n_l}(B_1\otimes B_2\otimes\ldots \otimes B_l))=\rho_{n_{\sigma(1)}, \ldots, n_{\sigma(l)}}(B_{\sigma(1)}\otimes B_{\sigma(2)}\otimes\ldots \otimes B_{\sigma(l)}).
\]
In particular,
\[
R_{r}(\rho_{n_1, \ldots, n_l}(B_1\otimes B_2\otimes\ldots \otimes B_l))=\rho_{n_{l}, \ldots, n_{1}}(B_{l}\otimes B_{l-1}\otimes\ldots \otimes B_{1}).
\]
\item[(b)] For any $\sigma\in S_l$ the operator $R_{\sigma}$ is an isometry.
\end{itemize}
\end{prop}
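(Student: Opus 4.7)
The plan is to attack part (a) by direct expansion, using the defining relation $R_\sigma(\rho_F(T))=\rho_{F\circ\sigma}(T)$ together with the explicit tensor-product formula of Section \ref{sec:TensorProduct}, and then to deduce part (b) from the observation that $R_\sigma$ acts by a coordinate permutation of matrix entries.

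For part (a), take $T=B_1\otimes\cdots\otimes B_l$, so that $T_{\overline{i},\overline{j}}=\prod_s B^s_{i_s,j_s}$, and write
\[
R_\sigma(\rho_{n_1,\ldots,n_l}(T))=\rho_{F_{\mathrm{TP}}\circ\sigma}(T)=\sum_{\overline{i},\overline{j}}\Big(\prod_s B^s_{i_s,j_s}\Big)E_{F_{\mathrm{TP}}(\sigma(\overline{i})),F_{\mathrm{TP}}(\sigma(\overline{j}))}.
\]
Next substitute $\overline{a}=\sigma(\overline{i})$ and $\overline{b}=\sigma(\overline{j})$, a bijection from $\mathbb{A}=\prod_s\{0,\ldots,n_s-1\}$ onto $\prod_s\{0,\ldots,n_{\sigma(s)}-1\}$. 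Reindexing $t=\sigma^{-1}(s)$ in the product gives $\prod_s B^s_{i_s,j_s}=\prod_t B^{\sigma(t)}_{a_t,b_t}=(B_{\sigma(1)}\otimes\cdots\otimes B_{\sigma(l)})_{\overline{a},\overline{b}}$, so the sum matches the definition of $\rho_{n_{\sigma(1)},\ldots,n_{\sigma(l)}}(B_{\sigma(1)}\otimes\cdots\otimes B_{\sigma(l)})$; the reversal subcase is just $\sigma=r$. For part (b), the function $F_{\mathrm{TP}}$ is injective on $\mathbb{A}$ by Corollary \ref{cor:RhoProperties}(c), so $R_\sigma$ sends the entry of $\rho_F(T)$ at position $(F(\overline{i}),F(\overline{j}))$ to $(F(\sigma(\overline{i})),F(\sigma(\overline{j})))$, a genuine permutation of coordinate entries induced by the bijection $\sigma$ of multi-indices. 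Coordinate permutations preserve the Frobenius norm, and the equality $\|R_\sigma X\|_F^2=\|X\|_F^2$ then follows from the same change of variables as in part (a).

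I expect the main delicacy to be in part (a), namely the careful matching between the coefficients $1,n_1,n_1n_2,\ldots$ defining $F_{\mathrm{TP}}$ on the left and the coefficients $1,n_{\sigma(1)},n_{\sigma(1)}n_{\sigma(2)},\ldots$ defining the stretching on the right under the substitution $\overline{a}=\sigma(\overline{i})$. Once this bookkeeping is clean, part (b) is essentially immediate from the entry-permutation description.
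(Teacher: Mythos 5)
Your argument is correct and follows essentially the same route as the paper: part (a) is the direct expansion from the definition $R_\sigma(\rho_F(T))=\rho_{F\circ\sigma}(T)$ (which the paper dismisses as ``straightforward from the definitions''), and part (b) is exactly the paper's observation that $R_\sigma$ permutes the matrix units of ${\rm Mat}(n_1n_2\ldots n_l)$, hence preserves the Frobenius norm. The one piece of bookkeeping you flag but do not close --- matching the coefficients $1,n_1,n_1n_2,\ldots$ of $F_{\rm TP}$ against $1,n_{\sigma(1)},n_{\sigma(1)}n_{\sigma(2)},\ldots$ on the right-hand side --- is resolved by the standing hypothesis $\sigma(\mathbb{A})\subset\mathbb{A}$ in the definition of $R_\sigma$: for a finite rectangular $\mathbb{A}$ this forces $n_{\sigma(s)}=n_s$ for all $s$, so the two mixed-radix systems coincide and your substitution $\overline{a}=\sigma(\overline{i})$ lands on the correct matrix unit; it is worth stating this explicitly, since without that hypothesis the two index functions genuinely differ.
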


\begin{proof} The statement (a) is straightforward from the definitions. 
The isometry property follows from the observation that $R_{\sigma}$ permutes the  elements of the basis of ${\rm Mat}(n_1 n_2\ldots n_l)$. Indeed, as we have noticed in the Corollary \ref{cor:ProductRule_1}, any $m\in\mathbb{Z}$ such that $0\leq m\leq n_1 n_2\ldots n_l-1$ can be uniquely represented in the form $m=\overline{k}\cdot \overline{i}$, where $\overline{k}=(1,n_1,n_1n_2,\ldots,n_1 n_2\ldots n_{l-1})$, $\overline{i}=(i_1,\ldots, i_l)$, $0\leq i_s\leq n_s-1$, $s=1,\ldots, l$. Thus, the operator $R_{\sigma}$ is a bijection which sends matrix units $E_{ml}^{n_1 n_2\ldots n_l}$, $0\leq m,l\leq n_1 n_2\ldots n_l-1$ into matrix units.
\end{proof}

 \begin{rem}
 The reversal operator $R_r$ provides another way to deduce part (b) of Corollary \ref{cor:RhoProperties-star}. We restrict ourselves to the case $l=2$. General case is treated analogously. We have
 \[
\rho_{n_1, n_2}(C_1\otimes C_2)\backsim \rho_{n_1, n_2}(D_1\otimes C_2)
\]
$\Rightarrow$
\[ \rho_{n_2, n_1}(C_2\otimes C_1)\backsim \rho_{n_2,  n_1}(C_2\otimes D_1)= R_r(\rho_{n_1,   n_2}(D_1\otimes C_2))\]
\[ \backsim R_r(\rho_{n_1,   n_2}(D_1\otimes D_2))=\rho_{n_1,  n_2}(D_2\otimes D_1).
\]
Applying the reversal operator  $R_r$ again, we get the result.
\end{rem}

Part (b) of Corollary \ref{cor:RhoProperties-star} allows us to classify $\rho_{n_1, \ldots, n_l} (C_1\otimes\ldots \otimes C_l)$ up to similarity. Indeed, it is enough to compute $\rho_{n_1, \ldots, n_l}$ of tensor products of direct sums of Jordan blocks. Here we will calculate $\rho_{n_1, \ldots, n_l}$ of the tensor product of two sums of Jordan blocks. The general case is more cumbersome and can be obtained in a similar fashion. We will use the following formula

\begin{lem}\label{lem:SumTensor} Let $C_i\in\rm{Mat}(\mu_i)$, $D_i\in\rm{Mat}(\nu_i)$ for $i=1,2$. 

(a). $$\rho_{{\mu}_1, {\nu}_1}(C_1\otimes D_1 )\backsim \rho_{{\nu}_1, {\mu}_1}(D_1\otimes C_1 ).$$

(b). 
\[\rho_{{\mu}_1+{\mu}_2, {\nu}_1+{\nu}_2}((C_1\oplus C_2)\otimes(D_1\oplus D_2))\]
\[\backsim \rho_{{\mu}_1, {\nu}_1}(C_1\otimes D_1 )\oplus\rho_{{\mu}_1, {\nu}_2}( C_1\otimes D_2) \oplus \rho_{{\mu}_2, {\nu}_1}(C_2\otimes D_1) \oplus \rho_{{\mu}_2, {\nu}_2}(C_2\otimes D_2).
\]
\end{lem}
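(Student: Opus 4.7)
The plan is to realize both similarities by exhibiting explicit permutation matrices, using the fact that conjugation by such a matrix is a special case of similarity.

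For part (a), I would use the canonical swap isomorphism $\tau: \mathbb{C}^{\mu_1} \otimes \mathbb{C}^{\nu_1} \to \mathbb{C}^{\nu_1} \otimes \mathbb{C}^{\mu_1}$ defined by $\tau(v \otimes w) = w \otimes v$, which intertwines $C_1 \otimes D_1$ with $D_1 \otimes C_1$. Under the identifications provided by the vector stretching maps $\rho^0_{\mu_1, \nu_1}$ and $\rho^0_{\nu_1, \mu_1}$, this $\tau$ becomes a permutation of the standard basis of $\mathbb{C}^{\mu_1 \nu_1}$, sending $e^{\mu_1 \nu_1}_{i + \mu_1 j}$ to $e^{\nu_1 \mu_1}_{j + \nu_1 i}$; the corresponding permutation matrix realizes the similarity. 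Equivalently, this is the action of the reversal operator $R_r$ from Section 2 applied to $\rho_{\mu_1, \nu_1}(C_1 \otimes D_1)$, and the proposition preceding this lemma already identifies $R_r$ as a bijection sending matrix units to matrix units, hence implemented by a permutation matrix.

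For part (b), I would set $M = \mu_1 + \mu_2$ and $N = \nu_1 + \nu_2$, and partition $\{0, \ldots, M - 1\} = A_1 \sqcup A_2$ with $A_1 = \{0, \ldots, \mu_1 - 1\}$ and $A_2$ its complement, and analogously $\{0, \ldots, N - 1\} = B_1 \sqcup B_2$. The key observation is that the entry of $(C_1 \oplus C_2) \otimes (D_1 \oplus D_2)$ at $((i, j), (i', j'))$ vanishes unless $(i, i') \in A_a \times A_a$ and $(j, j') \in B_b \times B_b$ for some $a, b \in \{1, 2\}$, in which case it equals $(C_a)_{i - \delta_a, i' - \delta_a}(D_b)_{j - \varepsilon_b, j' - \varepsilon_b}$ with $\delta_1 = \varepsilon_1 = 0$, $\delta_2 = \mu_1$, $\varepsilon_2 = \nu_1$. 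I would then define a permutation $\pi$ of $\{0, \ldots, MN - 1\}$ that regroups the four ``rectangular blocks'' $A_a \times B_b$ into four contiguous intervals of lengths $\mu_a \nu_b$, and within each block uses the tensor-product indexing $(i, j) \mapsto (i - \delta_a) + \mu_a (j - \varepsilon_b)$. Conjugating $\rho_{M, N}((C_1 \oplus C_2) \otimes (D_1 \oplus D_2))$ by the corresponding permutation matrix then yields the block-diagonal matrix $\rho_{\mu_1, \nu_1}(C_1 \otimes D_1) \oplus \rho_{\mu_1, \nu_2}(C_1 \otimes D_2) \oplus \rho_{\mu_2, \nu_1}(C_2 \otimes D_1) \oplus \rho_{\mu_2, \nu_2}(C_2 \otimes D_2)$ on the right-hand side.

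The hard part is the index bookkeeping in part (b): conceptually the lemma only says that $\rho$ transforms the canonical decomposition $\mathbb{C}^M \otimes \mathbb{C}^N = \bigoplus_{a, b} \mathbb{C}^{\mu_a} \otimes \mathbb{C}^{\nu_b}$ into a direct sum of matrices after reordering the basis, but verifying this cleanly via an explicit permutation without sign or off-by-one errors is the technical content of the proof.
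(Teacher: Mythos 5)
Your proposal is correct and is essentially the paper's own argument in expanded form: the paper's entire proof reads ``Reorder the rows and the columns in the same pattern at the same time,'' i.e.\ both similarities are realized by permutation matrices, which is exactly what you construct explicitly (the swap/reversal permutation for (a) and the block-regrouping permutation for (b)). Your version simply supplies the index bookkeeping that the paper leaves implicit.
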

\begin{proof}
Reorder the rows and the columns in the same pattern at the same time.
\end{proof}

We have also the following formula for the image of $\rho_{n_1, \ldots, n_l}$ up to similarity.

\begin{thm}\label{thm:JF} Denote by $J_{\mu}(a)$ the Jordan cell of size $\mu$ with eigenvalue $a$.
Let 
\[
C:=\bigoplus\limits_{i=1}^m J_{\mu_i}(a_i), \, D:=\bigoplus\limits_{j=1}^n J_{\nu_j}(b_j),
\]
where
\[
\overline{\mu}_l:=\sum\limits_{k=0}^l\mu_k, \,\,\overline{\nu}_s:=\sum\limits_{k=0}^s\nu_k,l=0,\ldots,m; \,\, s=0,\ldots,n\, (\mu_0=\nu_0=0).
\]
Then $\rho_{\overline{\mu}_m, \overline{\nu}_n}(C\otimes D) =$
\begin{eqnarray}
= \sum\limits_{l,s=0}^{m,n} a_lb_s\sum\limits_{i=\overline{\mu}_l}^{\overline{\mu}_{l+1}-1} 
\sum\limits_{j=\overline{\nu}_s}^{\overline{\nu}_{s+1}-1} E_{i+\overline{\mu}_m j, i+\overline{\mu}_m j} +
 \sum\limits_{l,s=0}^{m,n} b_s\sum\limits_{i=\overline{\mu}_l}^{\overline{\mu}_{l+1}-2} 
\sum\limits_{j=\overline{\nu}_s}^{\overline{\nu}_{s+1}-1} E_{i+\overline{\mu}_m j, i+\overline{\mu}_m j+1}\label{eqn:Classification}\\
+ \sum\limits_{l,s=0}^{m,n} a_l\sum\limits_{i=\overline{\mu}_l}^{\overline{\mu}_{l+1}-1} 
\sum\limits_{j=\overline{\nu}_s}^{\overline{\nu}_{s+1}-2} E_{i+\overline{\mu}_m j, i+\overline{\mu}_m j+\overline{\mu}_m}
+ \sum\limits_{l,s=0}^{m,n} \sum\limits_{i=\overline{\mu}_l}^{\overline{\mu}_{l+1}-2} 
\sum\limits_{j=\overline{\nu}_s}^{\overline{\nu}_{s+1}-2} E_{i+\overline{\mu}_m j, i+\overline{\mu}_m j+\overline{\mu}_m+1}.\nonumber
\end{eqnarray}
\end{thm}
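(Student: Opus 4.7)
The plan is to split each Jordan sum into its diagonal and nilpotent parts, expand the tensor product bilinearly, and identify each of the four resulting summands with one of the four sums in \eqref{eqn:Classification} by a direct entrywise computation. Write $C = A_C + N_C$ with $A_C = \bigoplus_l a_l I_{\mu_l}$ diagonal and $N_C = \bigoplus_l N_{\mu_l}$ the block-nilpotent part; in particular $(N_C)_{i,i+1} = 1$ precisely when $i$ and $i+1$ lie in the same Jordan block of $C$, i.e.\ when $i \in [\overline{\mu}_l, \overline{\mu}_{l+1}-2]$ for some $l$. Decompose $D = A_D + N_D$ analogously.

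Expanding
\[
C \otimes D \;=\; A_C \otimes A_D \,+\, N_C \otimes A_D \,+\, A_C \otimes N_D \,+\, N_C \otimes N_D,
\]
and using linearity of $\rho_{\overline{\mu}_m, \overline{\nu}_n}$, it suffices to evaluate the image of each summand. Under the tensor-product stretching map, $F_{\rm{TP}}(i_1, i_2) = i_1 + \overline{\mu}_m i_2$, so the tensor entry at $(i_1, j_1, i_2, j_2)$ is sent to $E_{i_1 + \overline{\mu}_m i_2,\; j_1 + \overline{\mu}_m j_2}$. Reading off the nonzero entries: $A_C \otimes A_D$ contributes the value $a_l b_s$ on the diagonal for $i \in [\overline{\mu}_l, \overline{\mu}_{l+1}-1]$ and $j \in [\overline{\nu}_s, \overline{\nu}_{s+1}-1]$ (the first sum); $N_C \otimes A_D$ contributes $b_s$ at shifts by $1$ with $i \in [\overline{\mu}_l, \overline{\mu}_{l+1}-2]$ (the second); $A_C \otimes N_D$ contributes $a_l$ at shifts by $\overline{\mu}_m$ with $j \in [\overline{\nu}_s, \overline{\nu}_{s+1}-2]$ (the third); and $N_C \otimes N_D$ contributes unit shifts by $\overline{\mu}_m + 1$ over the joint range (the fourth).

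The main obstacle is purely bookkeeping: one must carefully track the within-block constraints responsible for the $\overline{\mu}_{l+1}-2$ and $\overline{\nu}_{s+1}-2$ upper limits in the last three sums, since outside those ranges the shifts $N_C$ or $N_D$ would spuriously cross Jordan blocks and contribute nothing. No structural input beyond linearity of $\rho_{\overline{\mu}_m, \overline{\nu}_n}$ and the explicit form of $F_{\rm{TP}}$ is required. As an alternative route one could invoke Lemma \ref{lem:SumTensor}(b) to reduce to a direct sum of single-block images $\rho_{\mu_l, \nu_s}(J_{\mu_l}(a_l) \otimes J_{\nu_s}(b_s))$ and compute each as $a_l b_s I + b_s S_1 + a_l S_{\mu_l} + S_{\mu_l + 1}$ for appropriate shift matrices; but that route yields only similarity, whereas \eqref{eqn:Classification} is an equality in the standard tensor basis, so the direct expansion above is more faithful to the statement.
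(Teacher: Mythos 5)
Your proof is correct: the paper actually states Theorem \ref{thm:JF} without any proof, and your decomposition $C\otimes D=(A_C+N_C)\otimes(A_D+N_D)$ followed by linearity of $\rho_{\overline{\mu}_m,\overline{\nu}_n}$ and the explicit index map $(i_1,i_2)\mapsto i_1+\overline{\mu}_m i_2$ is exactly the computation the authors leave implicit (the remark after the theorem carries out the single-block case by the same expansion). Your observation that Lemma \ref{lem:SumTensor}(b) would only give the result up to similarity, whereas the claimed identity is an on-the-nose equality of matrices, is the right reason to prefer the direct entrywise route.
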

\begin{rem}
Theorem \ref{thm:JF} could be used to classify the image of $\rho_{n_1, \ldots, n_l}$ up to unitary equivalence. 
\end{rem}
\begin{rem}
Notice that the theorem above shows that $$\rho_{\mu, \nu}(J_{\mu}(a)\otimes J_{\nu}(b))\neq \rho_{\nu.\mu}(J_{\nu}(b)\otimes J_{\mu}(a))$$ even in the case $\mu=\nu>1$ and $a\neq b$. Indeed, we have
\begin{eqnarray}
\rho_{\mu, \nu}(J_{\mu}(a)\otimes J_{\nu}(b)) &= ab\suml_{i=0}^{\mu-1}\suml_{k=0}^{\nu-1} E_{i+\mu k,i+\mu k} + b\suml_{i=0}^{\mu-2}\suml_{k=0}^{\nu-1} E_{i+\mu k,i+\mu k+1}\nonumber\\
&+a\suml_{i=0}^{\mu-1}\suml_{k=0}^{\nu-2} E_{i+\mu k,i+\mu k+\mu} +\suml_{i=0}^{\mu-2}\suml_{k=0}^{\nu-2} E_{i+\mu k,i+\mu k+\mu+1}. \nonumber
\end{eqnarray}    
\end{rem}

  We have the following result.

\begin{thm} Let $p, q\in \mathbb{N}, a,b\in\mathbb{C}.$

\medskip

(1) When $ab \neq 0$,
$$\rho_{p,q}(J_p(a)\otimes J_q(b) )\backsim \oplus_{k=1}^{\min\{p, q\}}J_{p+q-2k+1}(ab).
$$

(2) When $a \neq 0$,
$$\rho_{p,q}(J_p(a)\otimes J_q(0) )\backsim \oplus_{k=1}^{p}J_{q}(a).
$$

(3) When $b \neq 0$,
$$\rho_{p,q}(J_p(0)\otimes J_q(b)) \backsim \oplus_{k=1}^{q}J_{p}(b).
$$

(4)
$$\rho_{p,q}(J_p(0)\otimes J_q(0) )\backsim \oplus_{k=1}^{\min\{p, q\}-1}(J_{k}(0)\oplus J_{k}(0))\oplus_{k=1}^{|p-q|+1}J_{\min\{p, q\}}(0).
$$
\end{thm}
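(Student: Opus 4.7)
The plan is to reduce every case to the classical Jordan canonical form of the Kronecker product of two Jordan blocks, and then handle the four cases separately, the last (case (1)) being the most intricate. By Example~\ref{ex:ex_1} together with Corollary~\ref{cor:RhoProperties}(c), the restriction of $\rho_{p,q}$ to $\mathrm{Mat}(p)\otimes\mathrm{Mat}(q)$ is an algebra isomorphism onto $\mathrm{Mat}(pq)$ sending $A\otimes B$ to the standard lexicographically indexed Kronecker product. Hence $\rho_{p,q}(J_p(a)\otimes J_q(b))$ is similar to $J_p(a)\otimes J_q(b)\in\mathrm{Mat}(pq)$, and the task becomes the computation of the Jordan form of this Kronecker product; one may also take the explicit matrix description supplied by Theorem~\ref{thm:JF} as a starting point.

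For cases~(2), (3), and~(4), I would exploit the identities $(A\otimes B)^k=A^k\otimes B^k$ and $\operatorname{rank}(A\otimes B)=\operatorname{rank}(A)\operatorname{rank}(B)$, which together determine the Segre sequence $s_k=\dim\ker T^k-\dim\ker T^{k-1}$. In case~(2), $T=J_p(a)\otimes J_q(0)$ with $a\neq 0$ gives $\operatorname{rank}(T^k)=p\cdot\max(q-k,0)$, so $s_k=p$ for $1\le k\le q$ and vanishes afterwards, producing exactly $p$ Jordan blocks of size $q$. Case~(3) is symmetric. For case~(4), the analogous rank formula $\operatorname{rank}(T^k)=\max(p-k,0)\cdot\max(q-k,0)$ gives $s_k=p+q-2k+1$ for $1\le k\le\min(p,q)$ (and $0$ beyond), and taking the second differences $s_k-s_{k+1}$ produces exactly two Jordan blocks of size $k$ for each $k<\min(p,q)$ together with $|p-q|+1$ blocks of size $\min(p,q)$, matching the stated decomposition.

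The principal obstacle is case~(1), where both factors are invertible and the rank identity above, applied to $T=J_p(a)\otimes J_q(b)$, does not directly control $(T-abI)^k$. My plan is to work within the $\mathbb{C}[x]$-module picture: identify $\mathbb{C}^p\otimes\mathbb{C}^q$ with the Artinian algebra $R=\mathbb{C}[x,y]/\bigl((x-a)^p,(y-b)^q\bigr)$, on which $T$ acts as multiplication by $xy$. Writing $X=x-a$ and $Y=y-b$, the operator $T-abI$ becomes multiplication by $bX+aY+XY$, which for $a,b\neq 0$ has a nondegenerate linear part in $X,Y$ modulo the square of the maximal ideal. A triangular change of generators linearizing this symbol then decomposes $R$ as a direct sum of cyclic $\mathbb{C}[T-ab]$-modules of dimensions $p+q-1,\,p+q-3,\,\ldots,\,|p-q|+1$, which is precisely the predicted Jordan structure. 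Equivalently, one can compute $\dim\ker(T-abI)^k$ directly from the nilpotent expression $N=bN_p\otimes I+aI\otimes N_q+N_p\otimes N_q$ by expanding $N^k$ and tracking the ranks of the resulting sums of Kronecker products of powers of $N_p,N_q$; either approach completes case~(1) and hence the theorem.
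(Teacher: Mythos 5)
Your argument is correct, but it takes a genuinely different route from the paper: the paper disposes of all four cases in one line by reducing (via the algebra isomorphism $\rho_{p,q}$, as you also do) to the Kronecker product $J_p(a)\otimes J_q(b)$ and then citing the classical Aitken--Roth theorem on elementary divisors of tensor products \cite{A1934, Roth1934, Brualdi1985}, whereas you supply a self-contained proof of that classical result. Your rank/Segre computations for cases (2)--(4) are straightforward and correct, and your module-theoretic treatment of case (1) is sound and can be made fully rigorous: the substitution $X'=X(1+Y/b)$ preserves the ideal $(X^p,Y^q)$ and turns $bX+aY+XY$ into the linear form $bX'+aY$, after which the Jordan type $(p+q-1,p+q-3,\ldots,|p-q|+1)$ of multiplication by a linear form with nonzero coefficients on $\mathbb{C}[u,v]/(u^p,v^q)$ is the strong Lefschetz/Clebsch--Gordan fact. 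What the paper's approach buys is brevity and a pointer to the combinatorial verification in \cite{Brualdi1985}; what yours buys is independence from the literature and, in case (1), a conceptual explanation of where the block sizes come from. One further payoff of your computation: in cases (2) and (3) your rank formulas force the eigenvalue of every Jordan block to be $0$ (consistent with $\operatorname{tr}\bigl(J_p(a)\otimes J_q(0)\bigr)=0$), so the blocks should read $J_q(0)$ and $J_p(0)$ rather than $J_q(a)$ and $J_p(b)$ as printed in the statement; your proof establishes the corrected version.
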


\begin{proof}
The result can be deduced from 
 the Theorem of Aitken and Roth (see \cite[Theorem 4.6]{Brualdi1985}, \cite{A1934}, \cite{Roth1934}).
\end{proof}

Now we can compute the Jordan canonical form for the matrix $\rho_{\mu_1, \mu_2, \ldots, \mu_n  }(C_1\otimes C_2\otimes \ldots\otimes C_n)$ where $C_i\in\rm{Mat}(\mu_i)$ by using Lemma 4.7 and  Theorem 4.10.

\medskip

\section{Stretching maps for rectangular sets}\label{sec:StMapsRectSets}

Let $n_1, \ldots, n_l$ be positive integers. We say that a set $\mathbb{A}\subset \mathbb{Z}^l$
is \emph{rectangular} if 
$$\mathbb{A}= \prod_{s=1}^l [0,n_s-1]  \cap \mathbb{Z}^l.$$ 

\begin{thm}\label{thm:UnitaryEquivalence}
    Let $\mathbb{A}\subset \mathbb{Z}^l$ be a finite  rectangular set and
    $F:\mathbb{A}\to \mathbb{Z}$  an injective  function.
      Then the stretching map $\rho_F^{\mathbb{A}}$ is similar to the tensor product map $\rho_{\rm{TP}}^{f}$.
\end{thm}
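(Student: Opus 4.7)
The plan is to exhibit an explicit permutation matrix $P \in {\rm Mat}(N)$, where $N := n_1 n_2 \cdots n_l = |\mathbb{A}|$, that implements the similarity, namely $P^{-1} \rho_F^{\mathbb{A}}(T) P = \rho_{\rm{TP}}^f(T)$ for every $T \in {\rm Mat}(\mathbb{A})$. The construction rests on the observation that the two stretching maps land in the same matrix algebra and differ only by the labeling chosen for the rows and columns of the target.

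First I would fix the identifications. Since $F$ is injective and $|\mathbb{A}| = N$, the image $F(\mathbb{A})$ is a subset of $\mathbb{Z}$ of cardinality $N$; let $\phi: F(\mathbb{A}) \to \{0, 1, \ldots, N-1\}$ be the unique order-preserving bijection, so that $\rho_F^{\mathbb{A}}(T) \in {\rm Mat}(N)$ can be written as $\sum_{\overline{i}, \overline{j} \in \mathbb{A}} T_{\overline{i}, \overline{j}} E^{N}_{\phi(F(\overline{i})), \phi(F(\overline{j}))}$. By the uniqueness of base-mixed expansion recalled in Example \ref{ex:ex_1}, the map $F_{\rm{TP}}: \mathbb{A} \to \{0, 1, \ldots, N-1\}$ is itself a bijection, and $\rho_{\rm{TP}}^f(T) = \sum_{\overline{i}, \overline{j} \in \mathbb{A}} T_{\overline{i}, \overline{j}} E^{N}_{F_{\rm{TP}}(\overline{i}), F_{\rm{TP}}(\overline{j})}$. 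Now define the permutation $\pi := \phi \circ F \circ F_{\rm{TP}}^{-1} \in S_N$ and let $P$ be the corresponding permutation matrix acting on $\mathbb{C}^N$ by $P e_k^N = e_{\pi(k)}^N$.

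Next I would verify by direct substitution that this $P$ works. Conjugation by $P$ sends the matrix unit $E^{N}_{a, b}$ to $E^{N}_{\pi^{-1}(a), \pi^{-1}(b)}$, and by construction $\pi^{-1}(\phi(F(\overline{i}))) = F_{\rm{TP}}(\overline{i})$ for every $\overline{i} \in \mathbb{A}$. Applying this identity to each term in the sum defining $P^{-1} \rho_F^{\mathbb{A}}(T) P$ immediately produces $\rho_{\rm{TP}}^f(T)$, which gives the similarity. Since $P$ is a permutation matrix, it is unitary, so the conclusion in fact strengthens to unitary equivalence (matching the label of the theorem).

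The only real difficulty is notational: three index sets (namely $\mathbb{A}$, $F(\mathbb{A})$, and $\{0, \ldots, N-1\}$) must be tracked together with the three bijections $F$, $F_{\rm{TP}}$, and $\phi$ among them. Once the permutation $\pi$ is written down correctly, the verification collapses to a one-line rewriting of the matrix unit basis, so no computation of products or checking of the algebra structure is required—the injectivity of $F$ together with the rectangular structure of $\mathbb{A}$ does all of the work.
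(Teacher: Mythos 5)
Your proposal is correct and follows essentially the same route as the paper: both factor $F$ as a permutation of $\{0,\ldots,N-1\}$ composed with $F_{\rm{TP}}$ and realize the similarity by conjugation with the corresponding permutation matrix. Your version is slightly more explicit in introducing the order-preserving bijection $\phi$ (where the paper says ``without loss of generality'') and in noting that the intertwiner is unitary, but the argument is the same.
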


\begin{proof}
Let  $\mathbb{A}= \prod_{s=1}^l [0,n_s-1]   \cap \mathbb{Z}^l$. 
Without loss of generality we can assume that the image of the set $\mathbb{A}$ is
$$F(\mathbb{A})=\{0, \ldots, n_1n_2\ldots n_l -1 \}=F_{\rm{TP}}(\mathbb{A}).$$

Let $F_{\rm{TP}}$ be a function from example \ref{ex:ex_1} corresponding to the tensor product. Since $\mathbb{A}$ is finite then any other injective function from $\mathbb{A}$ to $F_{\rm{TP}}(\mathbb{A})$ is a composition of a certain permutation  of the set $F_{TP}(\mathbb{A})$ with the function $F_{\rm{TP}}$. Hence $F=\sigma\circ F_{\rm{TP}}$  for some permutation $\sigma$. Consequently, we have
\[
\rho_F^{\mathbb{A}}(T)=\sum\limits_{\overline{i},\overline{j}\in\mathbb{A}}T_{\overline{i},\overline{j}} E_{\sigma(F_{\rm{TP}}(\overline{i})),\sigma(F_{\rm{TP}}(\overline{j}))}=U_{\sigma} \rho_{\rm{TP}}^{f}(T) U_{\sigma}^{-1},
\]
where $U_{\sigma}$ is a the matrix of the  transformation of rows and columns of $\rho_{F_{\rm{TP}}}$ corresponding to the permutation $\sigma$.
\end{proof}

Next we consider that case $\mathbb{A}=\mathbb{Z}^l$ and define the \emph{universal} tensor map.
Let $F_{en}:\mathbb{Z}^l\to \mathbb{Z}$ be an enumeration (bijection) of $\mathbb{Z}^l$. The corresponding universal tensor map 
$\rho_{F_{en}}$ is defined as follows: for any $T\in\mathbb{Z}^l$
\begin{equation}\label{def:UTP}
\rho_{F_{en}}(T):=\sum\limits_{\overline{i},\overline{j}\in\mathbb{Z}^l}T_{\overline{i},\overline{j}} E_{F_{en}(\overline{i}),F_{en}(\overline{j})}.
\end{equation}
\begin{rem}\label{rem:InfIndRho}
Let $\mathbb{A}=\mathbb{Z}^l$ and $F_{en}:\mathbb{Z}^l\to \mathbb{Z}$ be an enumeration of $\mathbb{Z}^l$. Then the the restriction of the universal tensor map 
$\rho_{F_{en}}$ on matrices $T$ with finite support (i.e. if $T$ has zero entries outside of some fixed finite ``rectangle")
is similar to the tensor product map. Note that globally this is not the case.
\end{rem}

\begin{rem}
The universal tensor map $\rho_{F_{en}}$ is unique up to similarity  by the same reasoning as in the proof of  Theorem \ref{thm:UnitaryEquivalence}.
\end{rem}
\begin{defin}\label{def:AveragingOperator} Fix a function $F:\mathbb{A}\to \mathbb{Z}$.
 Define the averaging map $\Psi: {\rm Mat}(\mathbb{A})\to {\rm Mat}(\mathbb{A})$ as follows
 \[
\Psi (T):={\rm{Id}}\ast (T\ast {\rm{Id}}), T\in {\rm Mat}(\mathbb{A}).
\]
\end{defin}

Note that if $F$ is injective, then $\Psi$ is the identity map.
\begin{thm}
    If $\mathbb{A}$ is finite rectangular then the stretching map $\rho_F$ is a composition of averaging map $\Psi$ and the map similar to tensor product map. If $\mathbb{A}=\mathbb{Z}^l$ then the stretching map $\rho_F$ is a composition of averaging map $\Psi$ and the universal tensor product map $\rho_{F_{en}}$.   
\end{thm}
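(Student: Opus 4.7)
The plan is to expand $\Psi(T)$ entrywise, observe that its entries depend only on the $\backsim_F$-equivalence classes of the indices, and then reduce to the injective case handled by Theorem~\ref{thm:UnitaryEquivalence}.

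First, by iterating the explicit formulas $(T\ast\mathrm{Id})_{\overline{i},\overline{j}}=\sum_{\overline{m}\backsim\overline{j}}T_{\overline{i},\overline{m}}$ and $(\mathrm{Id}\ast T)_{\overline{i},\overline{j}}=\sum_{\overline{m}\backsim\overline{i}}T_{\overline{m},\overline{j}}$ from Section~3, I would obtain
\[
\Psi(T)_{\overline{i},\overline{j}}=\bigl(\mathrm{Id}\ast(T\ast\mathrm{Id})\bigr)_{\overline{i},\overline{j}}=\sum_{\overline{m}\backsim\overline{i},\;\overline{n}\backsim\overline{j}}T_{\overline{m},\overline{n}},
\]
which depends on $(\overline{i},\overline{j})$ only through the $\backsim_F$-classes of its arguments. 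Comparing this with the identity
\[
\rho_F(T)_{F(\overline{i}),F(\overline{j})}=\sum_{F(\overline{m})=F(\overline{i}),\;F(\overline{n})=F(\overline{j})}T_{\overline{m},\overline{n}}
\]
that follows straight from Definition~\ref{def:StretchingMap}, one obtains the pointwise identity $\rho_F(T)_{F(\overline{i}),F(\overline{j})}=\Psi(T)_{\overline{i},\overline{j}}$. In other words, $\rho_F(T)$ is the matrix in $\mathrm{Mat}(F(\mathbb{A}))$ obtained from the class-block constant matrix $\Psi(T)$ by evaluating at any choice of class representatives.

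Next I would factor $F=\tilde F\circ q$, where $q:\mathbb{A}\to\mathbb{A}/\backsim_F$ is the quotient and $\tilde F$ is the induced injection. The ``evaluate at representatives" map identifies $\Psi(\mathrm{Mat}(\mathbb{A}))$ canonically with $\mathrm{Mat}(F(\mathbb{A}))$, and under this identification the restriction of $\rho_F$ to $\Psi(\mathrm{Mat}(\mathbb{A}))$ becomes the injective stretching map $\rho_{\tilde F}$. For $\mathbb{A}$ finite and rectangular I would then invoke Theorem~\ref{thm:UnitaryEquivalence} (or its proof) to produce a permutation conjugation $U_\sigma$ realizing the similarity between $\rho_{\tilde F}$ and the tensor product map $\rho_{\mathrm{TP}}^f$ after the appropriate reindexing of the image lattice. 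For $\mathbb{A}=\mathbb{Z}^l$ the same reindexing argument gives a similarity with the universal tensor map $\rho_{F_{en}}$ from~\eqref{def:UTP}, using Remark~\ref{rem:InfIndRho} to match the local form of $\rho_{F_{en}}$ with $\rho_{\mathrm{TP}}$ on finitely supported slices.

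The chief obstacle I anticipate is making the phrase \emph{``composition of $\Psi$ and a map similar to the tensor product map"} precise, because in the non-injective case the tensor product map $\rho_{\mathrm{TP}}^f$ lands in $\mathrm{Mat}(n_1\cdots n_l)$ while $\rho_F^{\mathbb{A}}(T)$ lies in the possibly strictly smaller $\mathrm{Mat}(|F(\mathbb{A})|)$, so the required similarity cannot be a single conjugation on the full tensor-product algebra. One is forced either to pass to the quotient $\mathbb{A}/\backsim_F$ (which in general is no longer rectangular, so that Theorem~\ref{thm:UnitaryEquivalence} must be applied in a slightly more general setting) or to work with a blown-up target space in which $\Psi(\mathrm{Mat}(\mathbb{A}))$ appears as a natural invariant subspace. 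In either case one must check that both factors of the composition are algebra homomorphisms with respect to $\ast$, not merely linear maps, so that the resulting factorization genuinely decomposes the representation $\rho_F^{\mathbb{A}}$.
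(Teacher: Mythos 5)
Your proposal follows the same two-step skeleton as the paper's own proof: factor $\rho_F$ through the averaging map $\Psi$, then identify the remaining (injective) factor with a relabelled tensor product map via Theorem~\ref{thm:UnitaryEquivalence}. Where you differ is in how the first step is executed, and your version is actually the more reliable one. The paper derives the factorization from the single identity $\rho_F(\Psi(T))=\rho_F(T)$, attributed to Theorem~\ref{thm:ProductProp}; but that theorem literally gives $\rho_F(\Psi(T))=\rho_F(\mathrm{Id})\,\rho_F(T)\,\rho_F(\mathrm{Id})$, and $\rho_F(\mathrm{Id})=\sum_{v\in F(\mathbb{A})}|F^{-1}(v)|\,E_{vv}$ is not the identity matrix when $F$ fails to be injective (in Example~\ref{ex:ex_a} it is $\mathrm{diag}(1,2,1)$). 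Your entrywise identity $\rho_F(T)_{F(\overline{i}),F(\overline{j})}=\Psi(T)_{\overline{i},\overline{j}}$ is the correct form of the statement: it shows directly that $\rho_F(T)$ is determined by $\Psi(T)$, yielding $\rho_F=G\circ\Psi$ with $G$ the evaluation-at-representatives map and without the spurious multiplicity factors.

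The obstacle you flag in your final paragraph is a genuine gap, and the paper's proof does not close it either. Theorem~\ref{thm:UnitaryEquivalence} applies to an injective $F$ on a rectangular $\mathbb{A}\subset\mathbb{Z}^l$ and produces a permutation conjugation inside $\mathrm{Mat}(n_1\cdots n_l)$, whereas your injective factor lives on $\mathbb{A}/\backsim_F$ (equivalently on $F(\mathbb{A})\subset\mathbb{Z}$), which is neither rectangular in the required multi-index sense nor of the right cardinality: in Example~\ref{ex:ex_a} the map $\rho_F^{\mathbb{A}}$ lands in $\mathrm{Mat}(3)$ while $\rho_{\rm{TP}}^{f}$ lands in $\mathrm{Mat}(4)$, so no conjugation can literally relate the two, and the paper's sentence ``it will be similar to tensor product map by Theorem~\ref{thm:UnitaryEquivalence}'' is a non sequitur. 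To finish one must first fix what ``a map similar to the tensor product map'' means in the non-injective case --- most naturally, a permutation relabelling of an injective stretching map on the quotient index set, which is exactly what your $G$ is; with that reading the second step is essentially immediate and Theorem~\ref{thm:UnitaryEquivalence} is not needed. Finally, be careful with your closing demand that both factors be $\ast$-homomorphisms: this is not automatic, since for non-injective $F$ one has $\mathrm{Id}\ast\mathrm{Id}\neq\mathrm{Id}$ and $\Psi$ need not respect $\ast$ on the nose, so either that requirement should be dropped from the statement being proved or it must be verified separately with the multiplicity factors tracked explicitly.
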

\begin{proof}
We have
\[
\rho_F(\Psi (T))=\rho_F(T), T\in {\rm Mat}(\mathbb{A}),
\]
by Theorem \ref{thm:ProductProp}. $\rho_F$ restricted on the image $\Psi({\rm Mat}(\mathbb{A}))$ will be injective. Consequently, in the case of finite $\mathbb{A}$, it will be similar to tensor product map by Theorem \ref{thm:UnitaryEquivalence}. If $\mathbb{A}=\mathbb{Z}^l$ it will be similar to the universal tensor product map $\rho_{F_{en}}$.
\end{proof}

Now we consider some examples of stretching map with rectangular set $\mathbb{A}$ and non-injective $F$.

\begin{example}\label{ex:ex_a}
Let $\mathbb{A}=\prodl_{i=1}^2\mathbb{A}_i$,  $\mathbb{A}_1=\mathbb{A}_2=\{0,1\}$, $F(\overline{i})=\overline{k}\cdot\overline{i}$ with $\overline{k}=(1,1)$. Then 
\[
\rho_F^{\mathbb{A}}: {\rm Mat}(2)\otimes {\rm Mat}(2)\to {\rm Mat}(3), \rho_F^{\mathbb{A}}(T)=\suml_{\overline{i},\overline{j}\in \{0,1\}^2}T_{i_1,j_1,i_2,j_2} E_{i_1+i_2,j_1+j_2}.
\]
In particular, if 
\[
A=\left(
\begin{array}{cc}
  a_{00}   & a_{01} \\
  a_{10}   & a_{11}
\end{array}
\right),
B=\left(
\begin{array}{cc}
  b_{00}   & b_{01} \\
  b_{10}   & b_{11}
\end{array}
\right)
\]
then
\[
\rho_F^{\mathbb{A}}(A\otimes B)=\left(
\begin{array}{ccc}
  a_{00}b_{00}   & a_{01}b_{00}+a_{00}b_{01} & a_{01}b_{01}  \\
  a_{00}b_{10}+a_{10}b_{00}   & a_{00}b_{11}+ a_{11}b_{00}+a_{01}b_{10}+a_{10}b_{01}  & a_{01}b_{11}+a_{11}b_{01}   \\
  a_{10}b_{10}   & a_{10}b_{11}+a_{11}b_{10} & a_{11}b_{11}     
\end{array}
\right).
\]
This formula can be rewritten as follows
\[
\rho_F^{\mathbb{A}}(A\otimes B)=
b_{00}\left(
\begin{array}{cc}
  A   & 0 \\
  0   & 0
\end{array}
\right)
+b_{11}
\left(
\begin{array}{cc}
  0   & 0 \\
  0   & A
\end{array}
\right)
+b_{10}
\left(
\begin{array}{cc}
  0   & 0 \\
  A   & 0
\end{array}
\right)
+b_{01}
\left(
\begin{array}{cc}
  0   & A \\
  0   & 0
\end{array}
\right).
\]
Notice that in this case, $\rho(A\otimes B)=\rho(B\otimes A)$.
We can calculate
\[
\rho_F^{\mathbb{A}}\left(
\left(
\begin{array}{cc}
  \lambda   & 1 \\
  0   & \lambda
\end{array}
\right)
\otimes
\left(
\begin{array}{cc}
  \mu   & 1 \\
  0   & \mu
\end{array}
\right)
\right)=
\left(
\begin{array}{ccc}
  \lambda\mu   & \lambda+\mu & 1 \\
  0   & 2\lambda\mu & \lambda+\mu\\
  0   & 0 & \lambda\mu
\end{array}
\right),
\]
\[
\rho_F^{\mathbb{A}}\left(
\left(
\begin{array}{cc}
 a_{00}   & a_{01} \\
  a_{10}   & a_{11}
\end{array}
\right)
\otimes
\left(
\begin{array}{cc}
  1   & 0 \\
  0   & 1
\end{array}
\right)
\right)=
\left(
\begin{array}{ccc}
  a_{00}   & a_{01} & 0 \\
  a_{10}   & a_{00}+a_{11} & a_{01}\\
  0   & a_{10} & a_{11}
\end{array}
\right)=
\left(
\begin{array}{cc}
  A   & 0 \\
  0   & 0
\end{array}
\right)
+
\left(
\begin{array}{cc}
  0   & 0 \\
  0   & A
\end{array}
\right).
\]
Nontrivial part of equivalence relationship $\backsim$ has form  $(0,1)\backsim (1,0)$ i.e. $\overline{m}\backsim\overline{n}$ iff
$\overline{m}=\overline{n}$ or $\overline{m}=\overline{n}^{t}$ (where $(m_1,m_2)^t=(m_2,m_1)$). Consequently,
\begin{eqnarray}
 (T^1\ast T^2)_{\overline{i},\overline{j}} &=\sum\limits_{\overline{m}} T^1_{\overline{i},\overline{m}}T^2_{\overline{m},\overline{j}}+
\sum\limits_{\overline{m}\neq\overline{m}^t} T^1_{\overline{i},\overline{m}}T^2_{\overline{m}^t,\overline{j}}\nonumber\\ 
&=\sum\limits_{\overline{m}} T^1_{\overline{i},\overline{m}}T^2_{\overline{m},\overline{j}}+ 
T^1_{\overline{i},(0,1)}T^2_{(1,0),\overline{j}}+
T^1_{\overline{i},(1,0)}T^2_{(0,1),\overline{j}}.
\end{eqnarray}
In particular,
\[
\left[(A\otimes B)\ast (C\otimes D)\right]_{\overline{i},\overline{j}}=\left[AC\otimes BD\right]_{\overline{i},\overline{j}}+...
\]
\begin{rem}
It would be interesting to find an explicit formula for $(A\otimes B)\ast (C\otimes D)$ in terms of tensor products  of $A,B,C,D$.
\end{rem}
\end{example}
\begin{example}\label{ex:ex_b}
Let $\mathbb{A}=\prodl_{i=1}^2\mathbb{A}_i$,  $\mathbb{A}_1=\mathbb{A}_2=\{0,1\}$, $F(\overline{i})=\overline{k}\cdot\overline{i}$ with $\overline{k}=(1,-1)$. We will simply write $\rho_F^{\mathbb{A}}$ as $\rho.$ Then 
\[
\rho: {\rm Mat}(2)\otimes {\rm Mat}(2)\to {\rm Mat}(3), \,\, \rho(T)=\suml_{\overline{i},\overline{j}\in \{0,1\}^2}T_{i_1,j_1,i_2,j_2} E_{i_1-i_2,j_1-j_2}.
\]

In particular, if 
\[
A=\left(
\begin{array}{cc}
  a_{00}   & a_{01} \\
  a_{10}   & a_{11}
\end{array}
\right),
B=\left(
\begin{array}{cc}
  b_{00}   & b_{01} \\
  b_{10}   & b_{11}
\end{array}
\right)
\]
then
\[
\rho(A\otimes B)=\left(
\begin{array}{ccc}
  a_{00}b_{11}   & a_{00}b_{10}+a_{01}b_{11} & a_{01}b_{10}  \\
  a_{00}b_{01}+a_{10}b_{11}   & a_{00}b_{00}+ a_{11}b_{11}+a_{01}b_{01}+a_{10}b_{10}  & a_{01}b_{00}+a_{11}b_{10}   \\
  a_{10}b_{01}   & a_{10}b_{00}+a_{11}b_{01} & a_{11}b_{00}     
\end{array}
\right).
\]
Let us denote
\[
B'=\left(
\begin{array}{cc}
  b_{11}   & b_{10} \\
  b_{01}   & b_{00}
\end{array}
\right)
\]
i.e. $B$ reflected both w.r.t. main diagonal and contra-diagonal. Then formula for $\rho(A\otimes B)$ can be rewritten as follows
\[
\rho(A\otimes B)=
a_{00}\left(
\begin{array}{cc}
  B'   & 0 \\
  0   & 0
\end{array}
\right)
+a_{11}
\left(
\begin{array}{cc}
  0   & 0 \\
  0   & B'
\end{array}
\right)
+a_{10}
\left(
\begin{array}{cc}
  0   & 0 \\
  B'   & 0
\end{array}
\right)
+a_{01}
\left(
\begin{array}{cc}
  0   & B' \\
  0   & 0
\end{array}
\right).
\]
Notice that in this case, $\rho'(A\otimes B)=\rho(B\otimes A)$ (where $'$ is a reflection both w.r.t. main diagonal and contra-diagonal). In particular,
\[
\rho\left(
\left(
\begin{array}{cc}
 a_{00}   & a_{01} \\
  a_{10}   & a_{11}
\end{array}
\right)
\otimes
\left(
\begin{array}{cc}
  1   & 0 \\
  0   & 1
\end{array}
\right)
\right)=
\left(
\begin{array}{ccc}
  a_{00}   & a_{01} & 0 \\
  a_{10}   & a_{00}+a_{11} & a_{01}\\
  0   & a_{10} & a_{11}
\end{array}
\right)=
\left(
\begin{array}{cc}
  A   & 0 \\
  0   & 0
\end{array}
\right)
+
\left(
\begin{array}{cc}
  0   & 0 \\
  0   & A
\end{array}
\right)
\]
In the same time,
\[
\rho\left(
\left(
\begin{array}{cc}
 1   & 0 \\
  0   & 1
\end{array}
\right)
\otimes
\left(
\begin{array}{cc}
 a_{00}   & a_{01} \\
  a_{10}   & a_{11}
\end{array}
\right)
\right)=
\left(
\begin{array}{cc}
  A'   & 0 \\
  0   & 0
\end{array}
\right)
+
\left(
\begin{array}{cc}
  0   & 0 \\
  0   & A'
\end{array}
\right)
\]
\[
\rho\left(
\left(
\begin{array}{cc}
  \lambda   & 1 \\
  0   & \lambda
\end{array}
\right)
\otimes
\left(
\begin{array}{cc}
  \mu   & 1 \\
  0   & \mu
\end{array}
\right)
\right)=
\left(
\begin{array}{ccc}
  \lambda\mu   & \mu & 0 \\
  \lambda   & 2\lambda\mu+1 & \mu\\
  0   & \lambda & \lambda\mu
\end{array}
\right),
\]
Nontrivial part of equivalence relationship $\backsim$ has form  $(0,0)\backsim (1,1)$. Consequently,
\begin{equation}
 (T^1\ast T^2)_{\overline{i},\overline{j}} 
=\sum\limits_{\overline{m}} T^1_{\overline{i},\overline{m}}T^2_{\overline{m},\overline{j}}+ 
T^1_{\overline{i},(0,0)}T^2_{(1,1),\overline{j}}+
T^1_{\overline{i},(1,1)}T^2_{(0,0),\overline{j}}.
\end{equation}

\end{example}
\begin{example}\label{ex:ex_c}
Let $\mathbb{A}=\prodl_{i=1}^2\mathbb{A}_i$,  $\mathbb{A}_1=\{0,1\}$, $\mathbb{A}_2=\{0,1,2\}$, $F(\overline{i})=\overline{k}\cdot\overline{i}$ with $\overline{k}=(1,1)$.
Then 
\[
\rho_F^{\mathbb{A}}: {\rm Mat}(2)\otimes {\rm Mat}(3)\to {\rm Mat}(4), \rho_F^{\mathbb{A}}(T)=\suml_{i_1,j_1=0}^1\suml_{i_2,j_2=0}^2 T_{i_1,j_1,i_2,j_2} E_{i_1+i_2,j_1+j_2}.
\]
In particular, if 
\[
A=\left(
\begin{array}{cc}
  a_{00}   & a_{01} \\
  a_{10}   & a_{11}
\end{array}
\right),
B=\left(
\begin{array}{ccc}
  b_{00}   & b_{01} & b_{02}\\
  b_{10}   & b_{11} &  b_{12}\\
  b_{20}   & b_{21} &  b_{22}  
\end{array}
\right)
\]
then
\[
\rho_F^{\mathbb{A}}(A\otimes B)=
a_{00}\left(
\begin{array}{cc}
  B   & 0 \\
  0   & 0
\end{array}
\right)
+a_{11}
\left(
\begin{array}{cc}
  0   & 0 \\
  0   & B
\end{array}
\right)
+a_{10}
\left(
\begin{array}{cc}
  0   & 0 \\
  B   & 0
\end{array}
\right)
+a_{01}
\left(
\begin{array}{cc}
  0   & B \\
  0   & 0
\end{array}
\right).
\]
\end{example}
\begin{example}\label{ex:ex_d}
Let $\mathbb{A}=\prodl_{i=1}^2\mathbb{A}_i$,  $\mathbb{A}_1=\mathbb{A}_2=\{0,1\}$,  $F(\overline{i})=\max \overline{i}$. 
Then 
\[
\rho_F^{\mathbb{A}}: {\rm Mat}(2)\otimes {\rm Mat}(2)\to {\rm Mat}(2), \rho_F^{\mathbb{A}}(T)=\suml_{i_1,j_1=0}^1\suml_{i_2,j_2=0}^1 T_{i_1,j_1,i_2,j_2} E_{\max (i_1,i_2),\max (j_1,j_2)}.
\]
In particular, if 
\[
A=\left(
\begin{array}{cc}
  a_{00}   & a_{01} \\
  a_{10}   & a_{11}
\end{array}
\right),
B=\left(
\begin{array}{cc}
  b_{00}   & b_{01} \\
  b_{10}   & b_{11} 
\end{array}
\right)
\]
then
\[
\rho_F^{\mathbb{A}}(A\otimes B)=a_{00}B
+a_{01}
\left(
\begin{array}{cc}
  0   & b_{00}+b_{01} \\
  0   & b_{10}+b_{11}
\end{array}
\right)
+a_{10}
\left(
\begin{array}{cc}
  0   & 0 \\
  b_{00}+b_{10}   & b_{01}+b_{11}
\end{array}
\right)\]
\[+a_{11}
\left(
\begin{array}{cc}
  0   & 0 \\
  0   & b_{00}+b_{10}+b_{01}+b_{11}
\end{array}
\right).
\]
Notice that $\rho_F^{\mathbb{A}}(A\otimes B)=\rho_F^{\mathbb{A}}(B\otimes A)$ and $\rho_F^{\mathbb{A}}$ conserves $\ast$-product of the following form:
\[
(T^1\ast T^2)_{\overline{i},\overline{j}}:= T^1_{\overline{i},(0,0)}T^2_{(0,0),\overline{j}}+(T^1_{\overline{i},(0,1)}+T^1_{\overline{i},(1,0)}+T^1_{\overline{i},(1,1)})(T^2_{(0,1),\overline{j}}+T^2_{(1,0),\overline{j}}+T^2_{(1,1),\overline{j}}).
\]
\end{example}

\begin{center}{\bf Acknowledgments}
\end{center}
 V. F. is supported in part by
the CNPq (402449/2021-5, 302884/2021-1) and by the Fapesp (2018/23690-6). K. Z.  gratefully acknowledge   partial financial supports  from  the NNSF (11871190) and NSERC (311907-2015).


\end{document}